\newtheorem{theorem}{Theorem}[section]
\newtheorem{lemma}[theorem]{Lemma}         
\newtheorem{proposition}[theorem]{Proposition}    
\newtheorem{corollary}[theorem]{Corollary}         
\theoremstyle{definition}
\newtheorem{definition}[theorem]{Definition}   
\newtheorem{example}[theorem]{Example}
\newtheorem{remark}[theorem]{Remark}
\newcommand{\bbR}{\mathbf{R}}
\newcommand{\bbD}{\mathbf{D}}
\newcommand{\bbV}{\mathbf{V}}
\newcommand{\bbG}{\mathbf{G}}
\newcommand{\bbM}{\mathbf{M}}
\newcommand{\bbI}{\mathbf{I}}
\newcommand{\bbL}{\mathbf{L}}
\newcommand{\bbS}{\mathbf{S}}
\newcommand{\bbX}{\mathbf{X}}
\newcommand{\bbQ}{\mathbb{Q}}
\newcommand{\bbB}{\mathbf{B}}
\newcommand{\bbE}{\mathbf{E}}
\newcommand{\bbbY}{\mathbf{Y}}
\newcommand{\bN}{\mathbb{N}}
\newcommand{\bR}{\mathbb{R}}
\newcommand{\cA}{\mathcal{A}}
\newcommand{\cF}{\mathcal{F}}
\newcommand{\cH}{\mathcal{H}}
\newcommand{\cP}{\mathcal{P}}
\newcommand{\cT}{\mathcal{T}}
\newcommand{\cL}{\mathcal{L}}
\newcommand{\ccB}{\mathcal{B}}
\newcommand{\fN}{\mathfrak{N}}
\newcommand{\imp}{\rightarrow}
\newcommand{\eqi}{\longleftrightarrow}
\newcommand{\ra}{\rangle}
\newcommand{\la}{\langle}
\newcommand{\st}{\operatorname{\mathrm{st}}}
\newcommand{\fin}{\operatorname{\mathrm{fin}}}
\newcommand{\dom}{\operatorname{dom}}
\newcommand{\rank}{\operatorname{rank}}
\newcommand{\sh}{\operatorname{\mathrm{sh}}}
\newcommand{\ext}{\operatorname{\mathrm{ext}}}
\newcommand{\as}{{}^{\ast}\!}
\newcommand{\ZF}{\mathrm{ZF}}
\newcommand{\ZFC}{\mathrm{ZFC}}
\newcommand{\SCOT}{\mathrm{SCOT}}
\newcommand{\BST}{\mathrm{BST}}
\newcommand{\IST}{\mathrm{IST}}
\newcommand{\HST}{\mathrm{HST}}
\newcommand{\T}{\mathrm{T}}
\newcommand{\B}{\mathrm{B}}
\newcommand{\AC}{\mathrm{AC}}
\newcommand{\pmba}{\raisebox{-6pt}{\begin{tikzpicture}[scale=.3]
\draw [dashed, decorate, decoration={brace, amplitude=5pt}] (0,0) -- (0,2);
\end{tikzpicture}}}
\newcommand{\pmbb}{\raisebox{-6pt}{\begin{tikzpicture}[scale=.3]
\draw [dashed, decorate, decoration={brace, amplitude=5pt}] (2,2) -- (2,0);
\end{tikzpicture}}}
\newcommand{\pmbaa}{\pmba\hspace{-5pt}\pmba}
\newcommand{\pmbbb}{\pmbb\hspace{-5pt}\pmbb}
\begin{document}
\title [Nonstandard hulls and Loeb measures]
{Constructing nonstandard hulls and Loeb measures in internal set theories}
\author{Karel Hrbacek}
\address{Department of Mathematics\\                City College of CUNY\\                New York, NY 10031\\}                \email{khrbacek@icloud.com}

\author{Mikhail G. Katz}
\address{Department of Mathematics\\    Bar Ilan University\\       Ramat Gan 5290002 Israel\\}                \email{katzmik@math.biu.ac.il}

\keywords{nonstandard analysis; internal set theory; external sets; nonstandard hull; Loeb measure}
\date{December 4, 2022}

\begin{abstract}
Currently the two popular ways to practice Robinson's nonstandard
analysis are the \emph{model-theoretic} approach and the
\emph{axiomatic/syntactic} approach.  It is sometimes claimed that the internal
axiomatic approach is unable to handle constructions relying on
external sets.  We show that internal frameworks provide
successful accounts of nonstandard hulls and Loeb measures.  
The basic fact this work relies on is that the ultrapower of the
standard universe by a standard ultrafilter 
is naturally isomorphic to a subuniverse of the internal universe. 
\end{abstract}

\maketitle

\tableofcontents

\section{Introduction}\label{Introduction}

Robinson named his theory ``Non-standard Analysis since it involves and
was, in part, inspired by the so-called Non-standard models of
Arithmetic whose existence was first pointed out by T. Skolem''
(Robinson \cite{R}, p. vii).
Currently there are two popular ways to practice Robinson's nonstandard analysis. 

The
\emph{model-theoretic approach}  encompasses Robinson's enlargements, obtained from the Compactness Theorem (Robinson~\cite{R}), ultrapowers (Luxemburg~\cite{Lux}),  and nonstandard universes based on superstructures (Robinson and Zakon~\cite{RZ}, Chang and Keisler~\cite{CK}).

The \emph{axiomatic/syntactic approach} originated in Hrbacek~\cite{H2} and Nelson~\cite{N}. Nelson's $\IST$ is particularly well known (see for example Robert~\cite{Rob}, Diener and Stroyan~\cite{DS}).
The monograph of Kanovei and Reeken~\cite{KR} is a comprehensive reference for axiomatic nonstandard analysis.

In the model-theoretic approach one works  with various nonstandard universes in $\ZFC$; see Chang and Keisler~\cite{CK}, Section 4.4, for definitions and terminology associated with this framework.
Let $\fN = ( V(X), V(\as X), \ast)$ be  a nonstandard universe.
The collection $\as\, V(X)$ of internal sets in $\fN$ is isomorphic to some bounded ultrapower
\footnote{
More generally, some bounded limit ultrapower.}
 of the superstructure $V(X)$.
It is  expanded to the superstructure $V(\as X)$ that contains also external sets.

Axiomatic systems for the internal part of nonstandard set theory, formulated in
the $\st$-$\in$-language,  are well suited for the development of infinitesimal analysis and much
beyond.  It is generally acknowledged that internal theories are easier to learn and to work with than the model-theoretic approaches.  However, it is sometimes claimed as a shortcoming of the
internal approach that external sets are essential for some of the
most important new contributions of Robinsonian nonstandard analysis
to mathematics, such as the constructions of nonstandard hulls
(Luxemburg~\cite{Lux2}) and Loeb measures (Loeb~\cite{Loeb}); see for example Loeb and Wolff~\cite{LW} p. xiv and \cite{Loeb2} p.\;vii, Loeb~\cite{LV} and Zlato\v{s}~\cite{Z}.
Such claims are not meant to be taken literally.
As $\IST$ includes $\ZFC$ among its axioms, nonstandard universes  and the external constructions in them make sense in $\IST$. 
However,  nonstandard universes have their own notions of standard and internal that are not immediately related to the analogous notions provided by $\IST$. 
Such a duplication of concepts could be confusing and complicates attempts to work directly in superstructure frameworks inside $\IST$.
In any case, the issue in question is whether these constructions can be carried out in internal set theories using the notions that these theories axiomatize.

In this paper we describe an approach to external constructions that is better suited to the conceptual framework provided by internal set theories.
The techniques we employ  have been known for a long time, but their use for the purpose of implementing external methods in internal set theories does not seem to explicitly appear in the literature.
We show that practically all objects whose construction in a superstructure involves external sets
can be obtained in the internal axiomatic setting by these techniques.

We work in \emph{Bounded Set Theory} $\BST$ (see Kanovei and Reeken~\cite{KR}, Chapter~3).
The axioms of $\BST$ are a slight modification of the more familiar axioms of $\IST$. We state them in Section~\ref{old} and follow with a discussion of how \emph{definable} external sets can be handled in $\BST$ as abbreviations.

The not-so-well known fact about $\BST$ is that its universe contains  subuniverses isomorphic to any standard ultrapower (or even limit ultrapower) of the standard universe. 
For every internal set $w$ there is a subuniverse $\bbS_w$ and a simply defined isomorphism of $\bbS_w$ with the ultrapower of the standard universe by a standard ultrafilter generated by $w$.
Thus $\BST$ naturally provides an analog of the internal universe $\as\, V(X)$ of any superstructure.
The subuniverses $\bbS_w$ satisfy some of the axioms of $\BST$, as described in Subsection~\ref{wstandard}; hence one can work with these subuniverses axiomatically, without any reference to their relationship to ultrafilters or ultrapowers.

External subsets of $\bbS_w$ and higher-order external sets built from them 
are not objects of $\BST$,  but
the above-mentioned isomorphism, in combination with the powerful principle of Standardization, provides a natural way to code these ``non-existent'' external sets by standard sets and to imitate the superstructure $V(\as X)$ of external sets. This idea is developed in Subsection~\ref{coding}.

The heart of the paper is Section~\ref{apps}, where we show how to construct nonstandard hulls of standard metric and uniform spaces and Loeb measures.   
We also consider analogous constructions on internal normed spaces,  and outline how one can treat neutrices and external numbers.
From the practical point of view, the best way to use these techniques may be to work out the external constructions  informally, and then code them up by standard sets. 
In principle, any construction involving external sets can be carried out in the framework of $\BST$ by these methods.

Section~\ref{appB} develops the relationship between the subuniverses $\bbS_w$ and ultrapowers, and supplies proofs of the properties of $\bbS_w$ stated in Subsection~\ref{wstandard}.

 In Section~\ref{appA}  we review some ways that have been proposed for doing external constructions in $\IST$  previously and discuss their shortcomings.  The principal difficulty is that they tend to produce objects that are ``too large''  to be suitable for further work (larger than the class of standard elements of any standard set).


\section{The internal set theory $\BST$}\label{old}

We work 
in the framework of $\BST$.\footnote{The \emph{bounded} sets of $\IST$ (those sets that are elements of standard sets) satisfy all of the axioms of $\BST$. Hence all arguments in this paper can be carried out in $\IST$, albeit less directly.}
The theory $\BST$ is formulated in the $\st$-$\in$-language. It is a conservative extension of $\ZFC$. 

Quantifiers with the superscript $\st$ range over standard sets.
Quantifiers with the superscript $\st\! \fin$ range over standard finite sets.
The axioms of  $\BST$ are, in addition to $\ZFC$ (the $\ZFC$ axiom schemata of Separation and Replacement apply to $\in$-formulas only): 
\begin{itemize}
\item
$\B$  (Boundedness)  $\quad \forall x \, \exists^{\st} y \,(x \in y)$. 
\item
$\T$ (Transfer) 
Let $\phi (v)$ be an $\in$-formula with standard parameters. Then
$$\forall^{\st} x\; \phi(x)  \imp  \forall x\; \phi(x).$$
 \item
$\mathrm{S}$ (Standardization)  Let $\phi (v)$ be an $\st$-$\in$-formula with arbitrary parameters. Then
$$\forall A\, \exists^{\st}  S\; \forall^{\st} x\; (x \in S \eqi x \in A \,\wedge\,\phi(x)). $$
\item
$\mathrm{BI}$  (Bounded Idealization)
Let $\phi(u,v)$ be an $\in$-formula with arbitrary parameters. For every standard set $A$
\[
\forall^{\st\!\fin} a \subseteq A \;\exists y\; \forall x \in a\;
\phi(x,y) \eqi \exists y \; \forall^{\st} x \in A \; \phi(x,y).
\]
\end{itemize}
See the references Kanovei and Reeken~\cite{KR} and Fletcher et al.~\cite{F} for motivation and more detail.

An equivalent existential version of Transfer  is $$\exists x\; \phi(x)  \imp  \exists^{\st} x\; \phi(x),$$  for $\in$-formulas $\phi $ with standard parameters.
Yet another equivalent version, easily obtained by induction on the logical complexity of the $\in$-formula $\phi(v_1,\ldots,v_k)$ (with standard parameters), is
$$\forall^{\st} x_1,\ldots, x_k\; [\,\phi (x_1,\ldots, x_k) \eqi \phi^{\st} (x_1,\ldots, x_k)]$$
where $\phi^{\st}$ is the formula obtained from $\phi$ by relativizing all quantifiers to $\st$
(that is, by replacing each occurrence of $\exists  $ by $\exists^{\st}   $
and  each occurrence of $\forall   $ by $\forall^{\st } $).

As usual in mathematics, symbols $\bN$ and $\bR$ denote respectively the set of all natural numbers and the set of all reals. In internal set theories there are two ways of thinking about them.
In the ``internal picture''  $\bR$ is viewed as the usual set of reals in which the predicate $\st$ singles out some elements as ``standard''; similarly for any infinite standard set. This is the view familiar from Nelson~\cite{N}.
In the ``standard picture'' the usual set $\bR$ is viewed as containing, in addition to its standard elements, also fictitious, ideal elements. See Fletcher et al.~\cite{F} for  further discussion.

Mathematics in $\BST$ can be developed in the same way as in $\IST$.
In particular, real numbers $r, s$ are \emph{infinitely close} (notation: $r \simeq s$) if $|r - s| < 1/n$ holds for all standard $n \in \bN\setminus \{0\}$. A real number $r $ is an \emph{infinitesimal} if $r \simeq 0$, $r \ne 0$. It is \emph{limited} if $|r| < n$ for some standard $n \in \bN$.
We recall that for every limited $x \in \bR$ there is a unique standard $r \in \bR$ such that $r \simeq x$;
it is called the \emph{shadow} (or \emph{standard part}) of $x$ and denoted $\sh(x)$; 
we also define $\sh(x) = +\infty$ when $x$ is unlimited, $x > 0$, and 
$\sh(x) = -\infty$ when $x$ is unlimited, $x < 0$.

All objects whose existence is postulated by $\BST$ are sets, sometimes called \emph{internal sets} for emphasis. 
The Separation axiom holds for $\in$-formulas only.
But it is common practice in the literature based on the internal axiomatic approach to introduce \emph{definable external sets} as convenient abbreviations (see eg. Vakil~\cite{V}, Diener and Stroyan~\cite{DS}).
One can enrich the language  of the theory by names for extensions of arbitrary $\st$-$\in$-formulas and in this way talk about $\st$-$\in$-definable subclasses of the universe of all (internal) sets. 
We note that
 (a) this does not amount to a formalization of a new type of entity called ``external set,'' which is a  more complicated task; 
and (b) this does not amount to informal use of the term ``external set,'' either (in the sense of relying on a background formalization).
It is similar to the way set theorists routinely employ classes in $\ZFC$ (the class of all sets, the class of all ordinals). Such classes serve as  convenient shortcuts in mathematical discourse because one can work with them ``as if'' they were objects, but they can in principle be replaced by their defining formulas.
Example~\ref{classuse} below illustrates this familiar procedure.

 Let $\phi (v)$ be an $\st$-$\in$-formula with arbitrary parameters. 
We employ dashed curly braces to denote the \emph{class} $\pmbaa x  \,\mid\, \phi (x) \pmbbb.$  
We emphasize that this is  merely a matter of convenience; the expression $z \in \pmbaa x 
\,\mid\, \phi (x) \pmbbb$ is just another notation for $\phi (z)$.
Usually we denote classes by  boldface characters.
Those classes that are included in some set are \emph{external sets}.
If there is a set $A$ such that $\forall x\,(x \in A \eqi \phi(x))$, then the class $\pmbaa x  \,\mid\, \phi (x) \pmbbb$ can be identified with the set $A$.\footnote{
In the model-theoretic approach \emph{external sets} are by definition the sets that are not internal. In the axiomatic approach it is customary to view internal sets as a special case of external sets.
}
Monads and galaxies are some familiar examples of external sets that are usually not sets. 
Let $(M, d) $ be a  metric space: the \emph{monad} of $a \in M$ is
$\bbM(a)=\pmbaa x \in M \,\mid\, d(x,a) \simeq 0\pmbbb$,
and the \emph{galaxy} of $a \in M$ is
$\bbG(a) = \pmbaa x \in M \,\mid\, d(x,a) \text{ is limited}\pmbbb$.
Some useful \emph{proper classes} (ie, classes that are not external sets) are $\bbV = \pmbaa x \,\mid\,x = x \pmbbb$ (the universe of all (internal) sets),\footnote{The symbol $\bbI$ is often used for this purpose in the literature, but in the context of $\st$-$\in$-theories, where all sets are internal, the notation $\bbV$ seems more appropriate.} and
$\bbS =  \pmbaa x \,\mid\, \st(x) \pmbbb$ (the universe of all standard sets).

\begin{example}\label{classuse}
Let $M$, $f: M \to M$ and $a \in M$ be standard.
A convenient way of defining continuity is as follows: 
\emph{The function $f$ is continuous at $a$ if $f[\bbM(a) ]\subseteq \bbM(f(a)).$}
But the use of external sets in this definition can be eliminated by rephrasing it as: 
\emph{The function $f$ is continuous at $a$ if for all $x$, $ d(x,a) \simeq 0$ implies  $d(f(x),f(a)) \simeq 0$.}
\end{example}

Definable external collections of internal sets  are adequately handled in $\BST$ in this manner; we refer to Vakil~\cite{V} for a thorough discussion. 
Difficulties arise  only when higher level constructs on external sets are needed, such as quotient spaces and power sets. We show how to handle such difficulties in Subsections~\ref{coding},~\ref{nshulls}, ~\ref{inthulls}, and~\ref{loeb}.


\section{Subuniverses of the universe of $\BST$}\label{subuniverses}

\subsection{$w$-standard sets}\label{wstandard}
Let us fix a set $w$ and a standard set $I$ such that $w \in I$.\footnote{
The Boundedness axiom guarantees that some such $I$ exists. This is one of the reasons we prefer to work with $\BST$ rather than $\IST$.
} 

\begin{definition} \label{def1}
A set $x$ is called $w$-\emph{standard} (notation: $\st_w(x)$) if $x =
f(w)$ for some standard function $f$ with domain $I$.  Next, we let
$\bbS_w = \pmbaa x \,\mid\, \st_w(x) \pmbbb$ be the universe of all
$w$-standard sets.  The notion of $w$-standardness depends only on
$w$, not on the choice of the standard set $I$.
\end{definition}

\begin{proposition}\label{propos}
(a) \quad $\quad \forall x\, (\st(x) \imp \st_w(x))$. 

(b) \quad $\forall x\, (\st_w(x) \imp \st(x))$ holds if and only if $w$ is standard.

(c) \quad$\st_w(f) \,\wedge \, \st_w(x) \,\wedge\, x \in \dom f\imp \st_w(f(x))$.

\end{proposition}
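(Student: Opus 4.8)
The plan is to lean throughout on the standard consequence of Transfer that \emph{any set uniquely definable by an $\in$-formula with standard parameters is itself standard} (apply the existential form of Transfer to the defining formula, then invoke uniqueness). This single principle suffices for all three parts. Recall that $I$ is standard and $w \in I$.

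For part (a), given a standard $x$, I would exhibit the constant function $g \colon I \to \{x\}$ with $g(i) = x$ for all $i$. Since $g$ is uniquely determined by the standard parameters $I$ and $x$, it is standard, and $g(w) = x$ because $w \in I$; hence $\st_w(x)$. For part (b), the backward direction is immediate: if $w$ is standard and $x = f(w)$ with $f$ standard, then $x$ is the value of a standard function at a standard argument, which is standard (again the value $f(w)$ is definable from the standard parameters $f$ and $w$). For the forward direction I would apply the hypothesis $\forall x\,(\st_w(x) \imp \st(x))$ to the identity map $\mathrm{id}_I$, which is standard; since $\mathrm{id}_I(w) = w$ this gives $\st_w(w)$, and the hypothesis then yields $\st(w)$.

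Part (c) is the substantive step. Given $\st_w(f)$ and $\st_w(x)$, fix standard functions $F, G$ on $I$ with $F(w) = f$ and $G(w) = x$. I would then define a single standard function $H \colon I \to \bbV$ by setting $H(i) = F(i)(G(i))$ whenever $F(i)$ is a function with $G(i) \in \dom F(i)$, and $H(i) = \emptyset$ otherwise. Because $H$ is defined by an $\in$-formula from the standard parameters $F, G, I$, it is standard. Evaluating at $w$ and using the hypothesis $x \in \dom f$ (so that the first case applies at $i = w$) gives $H(w) = F(w)(G(w)) = f(x)$, whence $\st_w(f(x))$.

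I expect the only point requiring genuine care to be the standardness of the case-defined $H$ in part (c): one must phrase its definition as a bona fide $\in$-formula with the standard parameters $F, G, I$, including the guard ``$F(i)$ is a function and $G(i) \in \dom F(i)$,'' so that the uniqueness-plus-Transfer argument applies cleanly. Parts (a) and (b) are then routine instances of the same principle, and I do not anticipate any further obstacle.
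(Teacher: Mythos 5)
Your proposal is correct and follows essentially the same route as the paper: the constant function for (a), the identity function on $I$ for the forward direction of (b) (you state it in direct form where the paper argues the contrapositive, but the idea is identical), and the case-defined composite $H(i)=F(i)(G(i))$ with default $\emptyset$ for (c). The one thing you make explicit that the paper leaves implicit is the Transfer-plus-uniqueness justification that the case-defined $H$ is standard, which is a correct and worthwhile clarification.
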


\begin{proof}
(a) Let $x$ be standard; we have $x = c_x(w)$ where $c_x$ is the constant function with value $x$.

(b) If $w$ is standard, then every $f(w)$ for standard $f$ is standard. If $w$ is nonstandard, let $f(i) =i$ be the identity function on $I$. Then $f(w) = w$ is $w$-standard but not standard.

(c) Assume $\st_w (f)$, $\st_w(x)$ and $x \in \dom f$. Then there are standard functions $F$, $G$ on $I$ such that 
$f = F(w)$ and $x = G(w)$. Define a  function $H$ on $I$ by
$$
H(i) = F(i) (G(i)) \text{ when the right side is defined; } H(i) = \emptyset \text{ otherwise.}
$$
Then $H$ is a standard function on $I$ and $H(w) = f(x)$.
\end{proof}

\begin{definition} \label{good}
A set $w$ is \emph{good} if 
there is $\nu \in \bN$ such that $\nu$ is $w$-standard but not standard.
\end{definition}

In particular,  if $\nu \in \bN$ is nonstandard, then $w = \nu$ is good and, more generally, $w = \la \nu, z \ra$ is good for any set $z$.

The following facts are immediate consequences of known results  (see Kanovei and Reeken~\cite{KR}, Sections~3.3, 6.1, 6.2, esp. Theorem 6.2.6).  For easy reference we give the proofs in Section~\ref{appB}.
Quantifiers with the superscript $\st_w$ range over $w$-standard sets.

\begin{proposition} \label{univ}
\begin{enumerate}
\item
\emph{(Transfer from $w$-standard sets)}
Let $\phi$ be an $\in$-formula with $w$-\emph{standard parameters}. Then
$$\forall^{\st_w} x\; \phi(x)  \imp  \forall x\; \phi(x).$$

\item
\emph{(Countable Idealization into $w$-standard sets)}

Let $\phi$ be an $\in$-formula with $w$-standard parameters. If $w$ is
good, then
\[
\quad \forall^{\st} n \in \bN\;\exists x\; \forall m \in \bN\; (m \le
n \;\imp \phi(m,x))\eqi \exists^{\st_w} x \; \forall^{\st} n \in \bN
\; \phi(n,x).
\]
\emph{In other words, $(\bbS_w, \bbS, \in)$ satisfies Countable Idealization.}

\item
\emph{(Representability)}\\
Let $\phi(v)$ be an $\in$-formula with standard parameters.
If $I $ is standard, $w \in I$, $x$ is $w$-standard, and $\phi(x)$ holds, then there is a standard function $f$ with $\dom f =I$ such that $x = f(w)$ and  $\phi (f(i))$ holds for all $i \in I$.
\end{enumerate}
\end{proposition}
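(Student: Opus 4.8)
For parts (1) and (3) I would exploit the same mechanism: every $w$-standard set is $f(w)$ for a standard function $f$ on $I$, so one produces $w$-standard witnesses by building a \emph{standard} function on $I$ and evaluating at $w$, the passage to standardness being supplied by existential Transfer. For (1) I would argue the contrapositive form $\exists x\,\phi(x)\imp\exists^{\st_w}x\,\phi(x)$. Writing the $w$-standard parameters as $a_1(w),\dots,a_k(w)$ with the $a_j$ standard on $I$ and $\theta$ the parameter-free matrix, and assuming $\exists x\,\phi(x)$, the set $E=\{i\in I:\exists x\,\theta(x,a_1(i),\dots,a_k(i))\}$ is standard (it is defined by an $\in$-formula with standard parameters) and contains $w$. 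By Choice in $\ZFC$ there is a function with domain $I$ picking, for each $i\in E$, a witness $\theta(f(i),\bar a(i))$; this is an $\in$-statement with standard parameters, so existential Transfer yields a standard such $f$, and then $f(w)$ is a $w$-standard set satisfying $\phi$ because $w\in E$. For (3) I would first apply existential Transfer to $\phi$ (which has standard parameters) to fix a standard $x_0$ with $\phi(x_0)$, write the given $x$ as $g(w)$ with $g$ standard on $I$, and patch: set $f(i)=g(i)$ where $\phi(g(i))$ holds and $f(i)=x_0$ otherwise. Then $f$ is standard, $\phi(f(i))$ holds for every $i\in I$, and $f(w)=g(w)=x$ since $\phi(g(w))=\phi(x)$ holds.

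For part (2) the right-to-left implication is immediate: a $w$-standard $x$ with $\phi(n,x)$ for all standard $n$ works at every standard bound $n$, because $m\le n$ with $n$ standard forces $m$ standard. The substance is the left-to-right implication, and this is where goodness is indispensable. Fix a standard $b:I\to\bN$ with $\nu:=b(w)$ nonstandard. The key move is to \emph{internally bound} the construction by $\nu$: consider $T=\{k\le\nu:\exists x\,\forall m\le k\,\phi(m,x)\}$, which is internal by $\in$-Separation, its defining formula being an $\in$-formula with the $w$-standard parameters $\nu,\bar a(w)$. The hypothesis places every standard $n$ into $T$, so $T$ contains all standard naturals; being downward closed and bounded by $\nu$, it is a nonempty initial segment and has a maximum $\ell=\max T$, which is therefore nonstandard. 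Crucially $\ell$ is itself $w$-standard, since it equals $t(w)$ for the standard function $t(i)=\max\{k\le b(i):\exists x\,\forall m\le k\,\phi(m,x,\bar a(i))\}$ (with the convention $\max\emptyset=0$). Now $\ell\in T$ says that $\exists x\,\forall m\le\ell\,\phi(m,x)$ is a true existential $\in$-statement with $w$-standard parameters $\ell,\bar a(w)$, so part (1) in its existential form supplies a $w$-standard $x$ with $\forall m\le\ell\,\phi(m,x)$; as $\ell$ is nonstandard, every standard $n$ satisfies $n\le\ell$, whence $\phi(n,x)$ for all standard $n$, as required.

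The main obstacle is precisely this left-to-right implication of (2): the hypothesis only furnishes, for each standard bound $n$, some internal witness, and these need not cohere into a single $w$-standard witness. Overspill by itself would yield a witness valid up to \emph{some} nonstandard level, but that level need not be $w$-standard, so part (1) could not be applied. The device that resolves this is to truncate the construction at the $w$-standard nonstandard $\nu=b(w)$ provided by goodness and pass to $\ell=\max T$: bounding by $\nu$ forces the internal initial segment $T$ to possess a maximum, and exhibiting that maximum as $t(w)$ certifies that the nonstandard bound $\ell$ is $w$-standard. Once the bound is $w$-standard, the target becomes a bounded $\in$-formula to which the already-established Transfer from $w$-standard sets applies, collapsing the problem onto the routine mechanism of part (1).
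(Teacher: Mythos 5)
Your proof is correct, but for parts (2) and (3) it takes a genuinely different route from the paper's. For (1) you and the paper do essentially the same thing: a standard Skolem function (obtained from Choice plus Transfer) evaluated at the relevant point, together with the closure of $\bbS_w$ under application of standard functions to $w$-standard arguments. For (3) the paper first passes to $\phi^{\st_w}(g(w))$ and invokes the isomorphism with the ultrapower $\bbV^I/U_w$ together with \L o\'{s}'s Theorem to conclude that $X=\{i \in I \mid \phi(g(i))\} \in U_w$, then patches $g$ off $X$ using the value $g(i_0)$ at some $i_0\in X$; you simply observe that $w$ itself lies in the standard set $X$ because $\phi(g(w))$ holds by hypothesis, and patch with a standard default $x_0$ supplied by existential Transfer --- a small but genuine simplification that avoids the ultrapower machinery. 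The real divergence is in (2): the paper cites the model-theoretic fact that ultrapowers by countably incomplete ultrafilters are $\omega_1$-saturated (Chang--Keisler, Theorem 6.1.1) and pulls Countable Idealization back through the isomorphism $\boldsymbol{\Phi}_w$ using Transfer, whereas you give a self-contained internal argument: truncate the witness set $T$ at the nonstandard $w$-standard integer $\nu = b(w)$ guaranteed by goodness, observe that $\ell = \max T$ is unlimited and is certified $w$-standard by exhibiting it as $t(w)$ for a standard $t$, and then apply part (1) to the true existential statement $\exists x\,\forall m \le \ell\,\phi(m,x)$ with $w$-standard parameters. Your argument buys elementarity --- no ultrafilters and no saturation theorem, in effect an internalization of the usual proof of $\omega_1$-saturation. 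What the paper's uniform method buys is that it extends verbatim to Proposition~\ref{satuniv} (Idealization over sets of cardinality $\le \kappa$ for $\kappa^+$-good $w$) by citing the corresponding saturation theorem for good ultrafilters; your hands-on overspill is tied to the linear order of $\bN$ and would not generalize so directly.
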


An  equivalent formulation of (1) is \emph{Transfer into $w$-standard sets}:  
$$\exists x\; \phi(x)  \imp  \exists^{\st_w} x\; \phi(x).$$

Another equivalent formulation for $\phi(v_1,\ldots,v_r)$ with $w$-standard parameters is $$\forall^{\st_w} x_1,\ldots, x_r\; [\,\phi (x_1,\ldots, x_r)\eqi 
\phi^{\st_w} (x_1,\ldots, x_r)],$$
where $\phi^{\st_w}$ is the formula obtained from $\phi$ by relativizing all quantifiers to $\st_w$.
In yet other words, $(\bbV, \bbS_w,  \in)$ satisfies Transfer. It also satisfies Boundedness and, for good $w$,   Bounded Idealization
(see Kanovei and Reeken~\cite{KR},  Theorem 6.1.16 (i)), but not Standardization (ibid, Theorem 6.1.15).
Although  we do not need these results in this paper, together they show that $(\bbV, \bbS_w,  \in)$  satisfies all the axioms of $\BST$ except Standardization.
Note that here $\bbS_w$ plays the role of a new ``thick standard universe.'' 
On the other hand, $(\bbS_w , \bbS , \in)$ satisfies Transfer, Boundedness, Standardization and Countable Idealization; here $\bbS_w$ plays the role of a new 
 ``thin internal universe.'' 

Idealization can be strengthened from $\bN$ to sets of cardinality $\kappa$ if  $w$ is chosen carefully. We  leave the technical definition of $\kappa^+$-\emph{good} sets to  Subsection~\ref{universproofs}
(see Definition~\ref{kappasat}). For our applications we need only  to know that for every standard uncountable cardinal $\kappa$ and every $z$ there exist $\kappa^+$-good $w$ so that $z$ is $w$-standard, a result which is also proved there.

\begin{proposition}\label{satuniv}
\emph{(Idealization into $w$-standard sets over sets of cardinality $\le \kappa$)}
Let $\phi$ be an $\in$-formula with $w$-standard parameters. If $w$ is $\kappa^+$-good, then
for every standard set $A$ of cardinality 
$\le \kappa$
$$\forall^{\st\! \fin} a \subseteq A \;\exists y\; \forall x \in a\;
\phi(x,y) \eqi \exists^{\st_w} y \; \forall^{\st} x \in A \; \phi(x,y).$$
\end{proposition}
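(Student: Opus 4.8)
The plan is to prove the two implications of the equivalence separately; only the forward direction will use the cardinality bound $|A|\le\kappa$ and the $\kappa^+$-goodness of $w$.

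I would first dispose of the backward implication, which needs neither hypothesis. Suppose $y_0$ is $w$-standard and $\phi(x,y_0)$ holds for every standard $x\in A$. If $a\subseteq A$ is standard and finite, then every element of $a$ is standard (a standard fact of $\BST$: a finite standard set has only standard elements, provable by Transfer), so each $x\in a$ is a standard element of $A$ and hence satisfies $\phi(x,y_0)$. Thus the set $y_0$ witnesses $\exists y\,\forall x\in a\;\phi(x,y)$, and as $a$ ranges over all standard finite subsets of $A$ we obtain the left-hand side.

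For the forward implication I would pass to the ultrapower representation developed in Section~\ref{appB}: the map $f(w)\mapsto[f]$ identifies $\bbS_w$ with the ultrapower $\prod_\cU\bbS$ of the standard universe by the standard ultrafilter $\cU$ on $I$ generated by $w$ (the standard $S\subseteq I$ with $w\in S$), and the content of Definition~\ref{kappasat} is exactly that, for $\kappa^+$-good $w$, this ultrapower is $\kappa^+$-saturated. I would then set up the type $p(y)$ in the single free variable $y$ consisting of the conditions $\phi(x,y)$ indexed by the standard elements $x\in A$, with the fixed $w$-standard parameters of $\phi$ adjoined as parameters. Because $A$ is standard with $|A|\le\kappa$, the family $p$ has at most $\kappa$ members and uses at most $\kappa$ parameters, so $\kappa^+$-saturation will realize $p$ as soon as $p$ is finitely satisfiable.

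Finite satisfiability matches the hypothesis precisely: a finite subset of $p$ is $\{\phi(x,y):x\in a\}$ for some finite set $a$ of standard elements of $A$, such an $a$ is itself standard, and the assumption $\forall^{\st\!\fin}a\subseteq A\;\exists y\,\forall x\in a\;\phi(x,y)$ supplies a common witness. Saturation then yields a $w$-standard $y$ with $\phi^{\st_w}(x,y)$ for every standard $x\in A$; since standard sets are $w$-standard by Proposition~\ref{propos}(a) and $y$ is $w$-standard, Transfer from $w$-standard sets (the relativization form of Proposition~\ref{univ}(1)) upgrades $\phi^{\st_w}(x,y)$ to $\phi(x,y)$ in $\bbV$, giving $\exists^{\st_w}y\,\forall^{\st}x\in A\;\phi(x,y)$. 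I expect the main obstacle to be this saturation step: verifying that the finite-satisfiability data above is exactly the input to the Keisler multiplicative-refinement construction packaged into $\kappa^+$-goodness, and keeping straight the bookkeeping that lets realization inside $\bbS_w$ be transferred back to the full universe.
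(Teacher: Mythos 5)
Your proposal is correct and follows essentially the same route as the paper: the paper likewise reduces the statement to the fact (Chang--Keisler, Theorem 6.1.8) that an ultrapower by a countably incomplete $\kappa^+$-good ultrafilter is $\kappa^+$-saturated, transports the resulting Idealization principle to $(\bbS_w,\bbS,\in)$ via Transfer and the isomorphism $\boldsymbol{\Phi}_w$ of Corollary~\ref{isom2}, and strips the relativization $\phi^{\st_w}$ with Transfer from $w$-standard sets. The only point worth tightening is the finite-satisfiability step, where the witness supplied by the hypothesis lives in $\bbV$ and must be pulled into $\bbS_w$ by the existential form of Proposition~\ref{univ}(1) before it realizes a finite subtype inside the ultrapower.
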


Propositions~\ref{univ} and \ref{satuniv} do not exhaust the
properties of the universes $\bbS_w$; for a list of further useful
principles see Kanovei and Reeken \cite{KR}, Theorem 3.3.7.


\subsection{Coding external sets} \label{coding}

Definition~\ref{def1} provides a natural way to represent $w$-standard sets by standard functions:
A $w$-standard $\xi \in \bbS_w$ is \emph{represented} by any standard $f \in \bbV^I$ such that $f(w) = \xi$.
Note that every $\xi$ has a proper class of representations. This causes some technical difficulties (see Section~\ref{appB}), which for our purposes are best resolved by fixing a \emph{universal standard set} $V$ so that all objects of interest are subsets of $V$ or relations on $V$; usually one requires $\bR\subseteq V$.  
 By Representability, for every $\xi \in V \cap \bbS_w$ there exists a  standard $f \in V^I$ such that $f(w) = \xi$. Moreover,
if $\psi$ is an $\in$-formula with standard parameters and $\psi(\xi)$ holds, then  $f$ can be chosen so that  $\psi (f(i))$ holds for all $i \in I$.
The representation  makes possible a coding of the external subsets of $V \cap \bbS_w$ by standard sets, and the coding process can be continued to the putative higher levels of the external cumulative hierarchy.
This process is enabled by the principle of Standardization.

\begin{definition}
Let $\phi(v)$ be a formula in the $\st$-$\in$-language, with arbitrary parameters. We let
$${}^{\st} \{ x \in A \,\mid\, \phi (x) \}$$
 denote the  standard set 
 $S$ such that  $\forall^{\st} x\,(x \in S \eqi x \in A \,\wedge\,\phi(x))$.
\end{definition}
The principle of Standardization postulates the existence of this set and Transfer guarantees its uniqueness.
Also by Transfer, if $\psi$ is any $\in$-formula with standard parameters and
$\forall^{\st} x \in A\; (\phi(x) \imp \psi(x))$, then  $\forall x \in S\; \psi(x)$.

In particular, if $\phi(u.v)$ is a formula in the $\st$-$\in$-language with arbitrary parameters, $A$, $B$  are standard, and  for every standard $x \in A$ there is a unique standard $y \in B$ such that $\phi(x, y)$, then there is a  unique standard function $F: A \to B$ such that $\forall^{\st} x \in A\;\phi (x, F(x))$ holds.
It suffices to let $F = {}^{\st}\{ \la x, y \ra \in A \times B \,\mid\, \phi(x,y) \}$.

\begin{definition}
The $w,V$-\emph{code} of an external set $\bbX\subseteq  V \cap \bbS_w$ is the standard set  
$$\Psi_{w,V} (\bbX) =X =     {}^{\st}\{ f \in V^I\,\mid\,    f(w) \in \bbX \}.$$
\end{definition}
In words, $\Psi_{w,V} (\bbX) $ is the standard set whose standard elements are precisely the standard $f\in V^I $ with $f(w) \in \bbX$.

We omit the subscripts $w$ and/or $V$ when they are understood from the context.

The coding is trivially seen to preserve elementary set-theoretic operations.  

\begin{proposition}\label{rulesforpsi}
For any  external sets $\bbX_1, \bbX_2 \subseteq V \cap \bbS_w$:
\begin{enumerate}
\item
$\Psi(\emptyset) = \emptyset$, \quad
$\bbX_1 \subseteq \bbX_2 \eqi \Psi(\bbX_1) \subseteq \Psi(\bbX_2) $;
\item
$\Psi(\bbX_1 \cup \bbX_2) =\Psi(\bbX_1) \cup \Psi(\bbX_2) $,\quad
$\Psi(\bbX_1 \cap \bbX_2) = \Psi(\bbX_1) \cap \Psi(\bbX_2) $;
\item
$\Psi(\bbX_1 \setminus \bbX_2) = \Psi(\bbX_1) \setminus \Psi(\bbX_2) .$ 
\end{enumerate}
\end{proposition}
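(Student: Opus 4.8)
The plan is to exploit two facts. First, every code $\Psi(\bbX)$ is standard by definition, and every Boolean combination of codes is again standard, since standardness is preserved by the $\in$-definable operations $\cup$, $\cap$, $\setminus$ applied to standard arguments (by Transfer, the unique set defined by an $\in$-formula with standard parameters is standard). Second, a standard set is completely determined by its standard elements: applying Transfer to the $\in$-formula $x \in S \eqi x \in T$ (resp.\ $x \in S \imp x \in T$), whose parameters $S,T$ are standard, shows that $\forall^{\st} x\,(x \in S \eqi x \in T)$ implies $S = T$, and $\forall^{\st} x\,(x \in S \imp x \in T)$ implies $S \subseteq T$. This determination lemma is the workhorse: each identity or inclusion between codes can be checked by testing standard elements only, and on a standard $f$ the defining property of $\Psi$ reduces the membership $f \in \Psi(\bbX)$ to the conjunction $f \in V^I \wedge f(w) \in \bbX$.

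With this in hand, parts (2) and (3) become routine. For a standard $f$ we have $f \in \Psi(\bbX_1 \cup \bbX_2)$ iff $f \in V^I$ and $f(w) \in \bbX_1 \cup \bbX_2$; distributing the conjunction over the disjunction and reapplying the defining property of $\Psi$ shows this is equivalent to $f \in \Psi(\bbX_1) \cup \Psi(\bbX_2)$. As both sides are standard sets with the same standard elements, they coincide. Intersection is identical, and for the difference one uses that, for standard $f$, the relation $f \notin \Psi(\bbX_2)$ is equivalent to $f \notin V^I \vee f(w) \notin \bbX_2$, where the already-present clause $f \in V^I$ eliminates the first disjunct. Likewise $\Psi(\emptyset)$ has no standard elements, since no standard $f$ satisfies $f(w) \in \emptyset$; by the determination lemma $\Psi(\emptyset) = \emptyset$.

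For the inclusion equivalence in (1), the forward direction is once more an elementwise check: if $\bbX_1 \subseteq \bbX_2$ and $f$ is a standard element of $\Psi(\bbX_1)$, then $f(w) \in \bbX_1 \subseteq \bbX_2$, so $f \in \Psi(\bbX_2)$ by the defining property, whence $\Psi(\bbX_1) \subseteq \Psi(\bbX_2)$.

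The backward direction is the one place where inspecting the codes on their standard elements is not by itself enough, and it is the step I expect to require the most care: we must pass from an inclusion of the standard codes, a statement about standard representatives $f$, back to an inclusion of the external sets, whose members may well be nonstandard. The remedy is Representability. Given $\xi \in \bbX_1$, the inclusion $\bbX_1 \subseteq V \cap \bbS_w$ guarantees that $\xi$ is $w$-standard and lies in the standard set $V$, so Representability yields a standard $f \in V^I$ with $f(w) = \xi$. Then $f$ is a standard element of $\Psi(\bbX_1)$; by hypothesis $f \in \Psi(\bbX_2)$, and since $f$ is standard the defining property gives $\xi = f(w) \in \bbX_2$. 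As $\xi \in \bbX_1$ was arbitrary, $\bbX_1 \subseteq \bbX_2$, which closes the equivalence.
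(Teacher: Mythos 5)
Your proof is correct and fills in exactly the verification the paper leaves implicit (the paper dismisses the proposition as "trivially seen," offering no written proof). Your two workhorses --- that a standard set is determined by its standard elements via Transfer, and that Representability supplies a standard representative $f \in V^I$ for each $\xi \in V \cap \bbS_w$ in the backward direction of (1) --- are precisely the facts the paper establishes just before the proposition and evidently intends the reader to use.
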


The coding preserves infinite unions and intersections as well.  
An externally countable sequence of external subsets of $V$ can be viewed in $\BST$ as an external subset $\bbX$ of $\bN \times V$, with $\bbX_n = \pmbaa x \,\mid\, \la n, x \ra \in \bbX \pmbbb$ for standard $n \in \bN$.
Let $\la X_n \,\mid\, n \in \bN \ra$ be the standard sequence such that $X_n = \Psi(\bbX_n)$ holds for all standard $n$ (its existence follows from Standardization). 
Then $$\Psi(\bigcup_{n \in \bN \cap \bbS} \bbX_n) = \bigcup_{n \in \bN} X_n$$
because if $f \in  \bigcup_{n \in \bN} X_n$ is standard, then   the least $n \in \bN$ such that $f \in X_n$ is standard. 
Similarly, $$\Psi(\bigcap_{n \in \bN \cap \bbS} \bbX_n) = \bigcap_{n \in \bN} X_n$$
because if a standard $f \in \Psi(\bigcap_{n \in \bN \cap \bbS} \bbX_n)$, then    $f \in X_n$ for all standard $n \in \bN$, and hence $f \in \bigcap_{n \in \bN} X_n$ by Transfer. 

It is convenient to relax the definition of coding so that every standard $S \subseteq V^I$ is a code of some external $\bbX  \subseteq V \cap \bbS_w$.

\begin{definition}
A standard set  $S$ \emph{codes} $\bbX \subseteq V \cap \bbS_w$ if 
 $f(w) \in \bbX$ for each standard $f \in S$ and
for each $\xi \in \bbX$ there is some standard $f \in S$ such that $f(w) = \xi$.
\end{definition}
If $S$ {codes $\bbX$, then
\[
\Psi(\bbX) = {}^{\st} \{f \in V^I \,\mid\, f(w) = g(w) \text{ for some
  standard } g \in S\}.
\]
 A code of $\bbX$ has to contain a representation for each $\xi \in \bbX$, but not necessarily all such representations. 

We note that  coding is independent of the choice of the universal standard set in the following sense:
If $\bbX  \subseteq V_1 \cap \bbS_w$ and $V_1 \subseteq V_2$, then $S$ codes $\bbX$ viewed as a subset of   $V_1 \cap \bbS_w$ iff $S$ codes $\bbX$ viewed as a subset of   $V_2 \cap \bbS_w$.
Hence the exact choice of $V$ is usually of little importance.

For any set $x$,  let $c_x$ be the  constant function  with value $x$. 
The informal  identification of $x$ with $c_x$  enables the identification of a standard set $A$ with  a code  \;$ {}^{\st}\{ c_x  \,\mid\,  x \in A \} = \{ c_x  \,\mid\,  x \in A \}$ for $A \cap \,\bbS$.
On the other hand, the standard set   $A^I$ is a code for $A \cap \,\bbS_w$.

Every standard  $S \subseteq V^I$ is a code of the unique external set 
$$\bbX_S = \pmbaa \xi \in V \cap \bbS_w \,\mid\, \xi = f(w)  \text{ for some standard } f_w \in S\pmbbb.$$
Such $S$ codes a subset of an external set $\bbX $
iff $\forall^{\st} f\, (f \in S \imp f(w) \in \bbX)$ iff $S \subseteq \Psi (\bbX) $.
Consequently, it would make sense to interpret the power set of $\widetilde{X}=\Psi (\bbX) $ as a code for the ``non-existent'' external power set $\cP^{\ext} (\bbX)$ of $\bbX$, and the standard subsets of $\cP(\widetilde{X})$ as codes for the ``external subsets of $\cP^{\ext} (\bbX)$.''  Since $\BST$ does not allow collections of external sets, this last remark cannot be made rigorous in it, but one can proceed ``as if'' such higher order external sets existed.

Intuitively, there is a hierarchy of external sets built up over $V \cap \bbS_w$:

$\boldsymbol{\cH}_1 = \cP^{\ext}(V \cap \bbS_w)$ and  $\boldsymbol{\cH}_{n+1} = \cP^{\ext} (\boldsymbol{\cH}_n)$ for standard $n \in \mathbb N$ \\
(we stop here to avoid further complications).
This hierarchy  cannot be formalized in $\BST$. But the corresponding hierarchy over $V^I$ is well-defined:

$H_1 = \cP(V^I)$ and  $H_{n+1} = \cP (H_n)$ for standard $n \in \mathbb N$.\\
In $\BST$ one can work legitimately in the latter hierarchy while keeping in mind 
that the coding establishes a (many-one) correspondence between $\boldsymbol{\cH}_n$ and 
$H_n \cap \bbS$.
Both hierarchies and their relationship could be formalized in $\HST$, a conservative extension of $\BST$ to a theory that encompassess also external sets  (Hrbacek~\cite{H2}, Kanovei and Reeken~\cite{KR}).

Note that the coding process treats elements $\xi$ of $V \cap \bbS_w$ as \emph{individuals}; they are not coded by $\Psi$. 
Thus $\xi$ is \emph{represented} by any $f$ with $f(w) = \xi$, but $\Psi(\xi)$ is undefined. The set
$\{\xi\} \subseteq V\cap \bbS_w$ is coded by $\{f\}$, even when  $ \{\xi\} \in V\cap \bbS_w$.
In particular,
if $\bbR$ is a binary relation on $V \cap \bbS_w$, then the  code for $\bbR$ is the standard binary relation 
$$ \Psi (\bbR) =R =    \,{}^{\st}\{ \la f , g  \ra  \,\mid\,  f, g \in V^I \,\wedge\, \la f(w), g(w)\ra \in \bbX \}.$$ 
Similarly for  functions and relations of higher arity.

Another variant of coding represents  each $\xi \in \bbX$ by a single object. 

\begin{definition} \label{coding2}
 Let $f \in V^I$ be standard. We let
$$ f_{w,V} = f_w  ={}^{\st}\{g \in V^I\,\mid\, g(w) = f(w) \}.$$

We define standard sets 
$V^I/w  = {}^{\st}\{ f_w \,\mid\, f \in V^I\}$  and, for $\bbX  \subseteq V \cap \bbS_w,$
\begin{align*}
\widetilde{\Psi}_w (\bbX) &= {}^{\st} \{ f_w \in V^I/w \,\mid \, f(w) \in \bbX\} \\
&= {}^{\st} \{ F \in V^I/w \,\mid\, \exists^{\st} f \in V^I\, (F = f_w \,\wedge\,  f(w) \in \bbX) \} .
\end{align*}

The coding $\widetilde{\Psi}_w$ is one-one. 
For standard $X \subseteq V^I/w $ we let 
$$\widetilde{\boldsymbol{\Psi}}_{w}^{-1} (X) = \bbX =
\pmbaa \xi \in V \cap \bbS_w\,\mid\, \exists^{\st} f \in V^I\,
(  f_w \in X \,\wedge\, f(w) = \xi  )\pmbbb .$$
\end{definition}

One can obtain $\widetilde{\Psi}_w (\bbX) $ from  $\Psi_w (\bbX) $ and vice versa:\\
$\widetilde{\Psi}_w (\bbX) = {}^{\st} \{ f_w \,\mid\, f \in \Psi_w(\bbX)\}$ and  
$\Psi_w(\bbX) =  {}^{\st} \{  f \in V^I \,\mid\, f_w \in \widetilde{\Psi}_w (\bbX) \}.$\\
Proposition~\ref{rulesforpsi} and the subsequent paragraph hold with $\Psi$ replaced by~$\widetilde{\Psi}$.

We define the hierarchy $\widetilde{H}_1 = \cP(V^I/w)$ and  $\widetilde{H}_{n+1} = \cP (\widetilde{H}_n)$ for standard $n \in \mathbb N$.  
The coding $\widetilde{\Psi}_w$ maps $\boldsymbol{\cH}_1$ onto $\widetilde{H}_1 \cap \bbS$. It extends informally to higher levels by 
$\widetilde{\Psi}_w(\bbX) = {}^{\st} \{ \widetilde{\Psi}_w(\bbbY) \,\mid\, \bbbY \in \bbX\}$ and provides a one-one correspondence between 
$\boldsymbol{\cH}_n$ and  $\widetilde{H}_n \cap \bbS$.


\section{Nonstandard hulls and Loeb measures in $\BST$}\label{apps}

\subsection {Nonstandard hulls of standard metric spaces}\label{nshulls}

Let $\bR$ be the field of real numbers and let $(M, d)$  be a standard metric space, so that $M$ is a standard set and the distance function is a standard mapping $d: M \times M \to \bR$.
A point $x \in M$ is \emph{finite} if $d(x,p)$ is limited for some (equivalently, for all) standard $p \in M$.

Fix an unlimited integer $w \in \bN$. 
We define the standard set $B_w$ by
$$B_w =\, {}^{\st} \{ f \in M^{\bN} \,\mid\, f(w) \text{ is a finite point of }M\}.$$

The standard relation $E_w$ on $B_w$ is defined by
$$E_w = \, {}^{\st} \{ \la f, g \ra \in B _w\times B_w \,\mid\, d(f(w) , g(w))\simeq 0\}.$$
Clearly $E_w $ is reflexive, symmetric and transitive on standard elements of $B_w$; it follows by Transfer that 
$E_w$ is an equivalence relation on $B_w$.
We denote the equivalence class of $f$ modulo $E_w$ by $f_{E_w}$.

By Standardization, there is a standard  function $D_w: B_w \times B_w \to \bR$ determined by  the requirement that
$D_w ( f, g ) = \sh ( d (f(w) , g(w) ))$
 for all standard $f, g\in B_w$.

For standard $f, g\in B_w$ the distance 
$$d (f(w) , g(w) )) \le 
d(f(w), f(0)) + d(f(0), g(0)) + d(g(0), g(w))$$
is limited, so $\sh ( d (f(w) , g(w) )) \in \bR$.

For standard $f,g,h \in B_w$ clearly 
$D_w ( f, g ) = D_w ( g,f ) $, 
$$D_w ( f, h ) \le D_w ( f, g ) + D_w ( g, h) , \text{ and }
D_w ( f, g )  = 0 \text{  iff } \la f, g \ra \in E_w.$$
Also, for standard $f,g,f',g' \in B_w$ we have 
\begin{equation*}\bigl( \la f, f'\ra \in E_w \,\wedge \, \la g, g' \ra \in E_w \bigr)\imp 
D_w(f,g) = D _w(f', g').
\end{equation*}
 By Transfer these properties hold for all $f,g ,h, f',g'  \in B_w$.

We observe that, for the natural choice $V = M \cup \bR$, $B_w$ is a code for the external set 
$$\bbB_w  = \pmbaa x \in M \cap \bbS_w  \,\mid\,  x \text{ is finite} \pmbbb,$$
$E_w$ is nothing but a code for the external equivalence relation 
$$\bbE_w  = \pmbaa \la x, y \ra \in \bbB_w \times \bbB_w   \,\mid\, d(x, y) \simeq 0 \pmbbb,$$
and $D_w$  is a code for the external function $\bbD_w: \bbB_w \times \bbB_w \to \bR \cap \bbS$ defined by 
$\bbD_w = \pmbaa \la x, y, r \ra \,\mid\, r = \sh(d(x,y))\pmbbb$. 

The  construction of the nonstandard hull of $(M,d)$ by the usual external method would  form the quotient space $\bbB _w/\bbE_w$ of $\bbB_w$ modulo $\bbE_w$.
This step cannot be carried out in $\BST$. For $x \in \bbB_w$, the equivalence class of $x$ modulo $\bbE_w $ is $\bbM_w(x) = \pmbaa z \in M \cap \bbS_w \,\mid\, d(x,z) \simeq 0 \pmbbb$, but the 
collection  of the classes $\bbM_w(x)$ for all $x \in \bbB_w$ is not supported by $\BST$.
However, for standard $f \in B_w$, the set $f_{E_w}$ is a code of $\bbM_w(x)$ when $f(w) = x$.
Therefore
$\bbB_w /\bbE_w$ can be replaced by  $B_w/E_w$, which is a standard set in $\BST$.
The standard elements of  $B_w/E_w$ are precisely the codes of the monads $\bbM_w(x)$ for $x \in \bbB_w$. Hence $B_w/E_w$ is a code (as discussed in Subsection~\ref{coding}) of the ``non-existent'' 
(in $\BST$) quotient space $\bbB _w/\bbE_w$.
 
We stress that while external sets serve as a motivation for our constructions, they are not actually used in them; the constructions deal only with sets of $\BST$. The same applies to the rest of this section.  

We let  $\widehat{M}_w = B_w / E_w $ be the standard quotient space of $B_w$ modulo $E_w$.
From now on we often omit the subscript $w$ when it is understood from the context.

The  function $D $ factors by $E$ to a (standard) function $\widehat{D} =  D / E$  on $\widehat{M}$, defined by  
$\widehat{D} ( f_E, g_E ) = D(f, g)$
(as shown above, the value of~$\widehat{D}$ is independent of the choice of representatives $f$, $g$).
It is clear that $\widehat{D}$ is a metric on $\widehat{M} $.

The embedding $c$ of $M$ into $\widehat{M}$ is via constant functions: 
for $x \in M$, $c(x) = (c_x)_E$ where $c_x$ is the constant function on $\bN$ with value $x$.
Trivially, the embedding $c$ preserves the metrics.
We  identify $M$ with its image in $\widehat{M}$ under this embedding.

We emphasize that the structure $(\widehat{M} , \widehat{D} \,)$ depends on the choice of the parameter $w$, an unlimited integer. 
A metric space has a unique completion up to isometry, but it may have many non-isometric nonstandard hulls.

\begin{example}\label{exx1}
Let $M = \bbQ$ be the set of rationals and $d(x,y) = |x-y|$ be the usual metric on $\bbQ$.
We fix an unlimited $w\in \bN$. We note that, for standard $f$,
$$f \in B_w \eqi  f \in \bbQ^{\bN} \,\wedge\, f(w)  \text{ is limited},$$
and for standard $f,g \in B_w$
\[
\begin{aligned}
\la f, g \ra \in E_w & \eqi f(w) \simeq g(w) 
\\&\eqi f(w), g(w) \in \bbM(a)
\text{ for } a = \sh(f(w)) = \sh(g(w)).
\end{aligned}
\]
By Standardization, there is a standard function $\Gamma: B_w/E_w \to \bR$ such that, for standard $f$,  $\Gamma(f/E ) =  \sh(f(w)).$
It is easy to verify that $\Gamma \upharpoonright (B_w/E_w)  \cap\bbS$ is a one-one mapping of $\widehat{M}_w \cap \bbS$ onto 
$\bR \cap \bbS$ that preserves the metrics:
\[
\begin{aligned}
\widehat{D}(f/E, g/E) &= D(f,g) = \sh(|f(w) - g(w)|) \\&= |\sh(f(w)) -
\sh(g(w))| = |\Gamma(f) - \Gamma(g)|.
\end{aligned}
\]
Moreover, $\Gamma((c_x)_E) = x$ for any standard $x \in \bbQ$.
By Transfer, $\Gamma$ is an isometric isomorphism of $\widehat{M}_w$ and $\bR$ which is the identity on $\bbQ$.
In particular, the nonstandard hull is independent of the choice of $w$.
\end{example}

\begin{example}\label{exx2}
Let $M= \bN$ and let $d$ be the discrete metric on $M$; ie,  $d(x,z) = 1$ for all  $x,z \in M$, $x \neq z$.
As all points of $M$ are finite with respect to this metric, we have $B_w = \bN^{\bN}$. Also $$E_w = {}^{\st}\{ \la f, g \ra \in \bN^{\bN}
\times \bN^{\bN} \,\mid\, f(w) = g(w)\}$$ and, for standard $f \in \bN ^{\bN}$, $f/E = 
{}^{\st} \{ g \in \bN^{\bN} \,\mid\, g(w) = f(w)\}  = f_w$.
The space $M$ is identified with a subset of  $\widehat{M}_w$ via $\Gamma:  (c_x)_w \mapsto x$.

In Remark~\ref{ultrapower} 
we establish that 
$\widehat{M}_w =B_w/E_w = \bN^{\bN}/w$ is exactly the ultrapower of $M$ by $U_w$, an ultrafilter over $\bN$ generated by $w$;  in particular, it has the cardinality of the continuum.  Also, $\widehat{D}$ is the discrete metric on $\widehat{M}_w$. By replacing $\bN$ with $I$ as in Remark~\ref{remark6} one can obtain nonstandard hulls of arbitrarily large cardinality.
\end{example}

\begin{example}\label{exx3}
Approachable points play an impotant role in the study of nonstandard hulls.
We define the concept as follows.

Let $(M, d)$ be a standard metric space.  A point $x \in M$ is \emph{approachable} if for every standard $\epsilon > 0$ there is a standard $a \in M$ such that $d(x, a) \le \epsilon$.

The approachable points in $M \cap \bbS_w$ become exactly the standard points of the closure of $M$ in its nonstandard hull $\widehat{M}_w$. Indeed, for standard $f \in B_w$ with $f(w) = x \in M$ we have 
$$x \text{ is approachable } \eqi \forall^{\st} \epsilon > 0 \; \exists^{\st} a \in M\; (d(x,a) \le \epsilon)
\eqi$$
$$\forall^{\st} \epsilon > 0 \; \exists^{\st} a \in M\; (\widehat{D}(f/E,\,a) \le \epsilon) \eqi
\forall \epsilon > 0 \; \exists a \in M\; (\widehat{D}(f/E,\,a) \le \epsilon),$$
 where the last step is by Transfer.
Thus in Example~\ref{exx1} all finite $x \in \bbQ$ are approachable and consequently all standard points in $\bR$ are in the closure of $M = \bbQ$. By Transfer, this is true for all points in $\bR$.
In Example~\ref{exx2} all nonstandard points of $\bN$ are inapproachable and therefore $M$ is closed  in $\widehat{M}_w$. 
\end{example}

\begin{remark}
In this and other constructions in this section we use the coding based on  $\Psi$. The advantage of this choice is that it produces spaces of functions (see in particular Subsection~\ref{inthulls}). 
One can use $\widetilde{\Psi}$ instead, and define eg. 
$\widetilde{B}_w =\, {}^{\st} \{ f_w  \,\mid\, f(w) \text{ is a finite point of }M\}.$
The advantage here is that one gets an  isomorphism 
of the external structure $( \bbB_w, \,\bbE_w,\, \bbD_w)$ with
$( B_w \cap \bbS, \;E_w \cap \bbS, \;D_w \cap \bbS)$. It is thus immediately apparent that 
$((\widetilde{B}_w/ \widetilde{E}_w)\cap \bbS, \,(\widetilde{D}_w/\widetilde{E}_w) \cap \bbS) $ would be isomorphic to 
$(\bbB_w/\bbE_w, \,\bbD_w/\bbE_w)$ if the latter quotient could be formed in $\BST$. (It can be formed in $\HST$ and this claim is a theorem there.)
For the final result the choice of coding method does not matter.

\begin{proposition}
 The  structures 
$(\widetilde{B}_w/ \widetilde{E}_w, \,\widetilde{D}_w/\widetilde{E}_w) $ and 
$(B_w/ E_w, \,D_w/E_w) $ are isomorphic. 
\end{proposition}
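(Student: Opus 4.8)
The plan is to produce a single standard map between the two quotients, induced by the assignment $f \mapsto f_w$, and to verify that it is an isometric bijection on \emph{standard} elements, then upgrade the conclusion to all elements by Transfer. First I would record how the two coding schemes match up. Recall that $\widetilde{E}_w$ and $\widetilde{D}_w$ are the $\widetilde{\Psi}_w$-codes of the external relation $\bbE_w$ and the external function $\bbD_w$, whereas $E_w$ and $D_w$ are their $\Psi_w$-codes; using the identity $\widetilde{\Psi}_w(\bbX) = {}^{\st}\{ f_w \mid f \in \Psi_w(\bbX)\}$ and the definitions of the relevant codes, I obtain for all standard $f, g \in B_w$
$$\la f_w, g_w \ra \in \widetilde{E}_w \eqi \la f, g \ra \in E_w \eqi d(f(w), g(w)) \simeq 0,$$
together with $\widetilde{D}_w(f_w, g_w) = \sh(d(f(w), g(w))) = D_w(f, g)$. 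The structural point that makes the whole argument run is that the relation $f_w = g_w$ (equivalently $f(w) = g(w)$) refines $E_w$; hence first passing from $B_w$ to $V^{\bN}/w$ and then dividing by $\widetilde{E}_w$ produces exactly the same identifications as dividing $B_w$ by $E_w$ directly. This is precisely the situation of the standard isomorphism for nested equivalence relations.

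Next I would construct the map. By Standardization there is a standard function $\Theta$ with domain $B_w/E_w$ satisfying $\Theta(f_{E_w}) = (f_w)_{\widetilde{E}_w}$ for every standard $f \in B_w$. Two observations justify this. \emph{(i)} Every standard element of a standard quotient has a standard representative: since the quotient map $q\colon B_w \to B_w/E_w$ is a standard surjection, for standard $c$ the $\in$-statement $\exists f\,(f \in B_w \wedge q(f) = c)$ transfers to a standard witness, and likewise for $\widetilde{B}_w/\widetilde{E}_w$. \emph{(ii)} The assignment is well defined on standard classes: if $f, g$ are standard with $f_{E_w} = g_{E_w}$, that is $\la f, g \ra \in E_w$, then $\la f_w, g_w \ra \in \widetilde{E}_w$ by the displayed equivalence, so $(f_w)_{\widetilde{E}_w} = (g_w)_{\widetilde{E}_w}$. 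The function principle recorded after the definition of ${}^{\st}\{ x \in A \mid \phi(x)\}$ then supplies the standard $\Theta$.

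Finally I would check, on standard elements, that $\Theta$ is an isometric bijection and transfer. Injectivity reverses \emph{(ii)}: if $(f_w)_{\widetilde{E}_w} = (g_w)_{\widetilde{E}_w}$ then $\la f_w, g_w \ra \in \widetilde{E}_w$, hence $\la f, g \ra \in E_w$ and $f_{E_w} = g_{E_w}$. For surjectivity, a standard class of $\widetilde{B}_w/\widetilde{E}_w$ has a standard representative $F \in \widetilde{B}_w$ by \emph{(i)}, and a standard element of $\widetilde{B}_w$ is of the form $f_w$ for some standard $f \in B_w$ by the definition of $\widetilde{B}_w$, so that class equals $\Theta(f_{E_w})$. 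Metric preservation is the computation
$$(\widetilde{D}_w/\widetilde{E}_w)\bigl((f_w)_{\widetilde{E}_w}, (g_w)_{\widetilde{E}_w}\bigr) = \widetilde{D}_w(f_w, g_w) = D_w(f, g) = (D_w/E_w)(f_{E_w}, g_{E_w}).$$
Since $\Theta$ and both structures are standard and these bijectivity and isometry assertions hold for all standard arguments, Transfer upgrades them to all arguments, yielding the desired isomorphism. I expect the only real obstacle to be the bookkeeping: verifying that the $\widetilde{\Psi}$-codes $\widetilde{E}_w, \widetilde{D}_w$ relate to $E_w, D_w$ as displayed, and confirming that standard classes have standard representatives so that $\Theta$ is fully determined by its action on standard inputs.
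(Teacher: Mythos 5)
Your proposal is correct and follows essentially the same route as the paper: the paper's proof consists precisely of the three observations that, for standard $f,g$, membership in $B_w$ versus $\widetilde{B}_w$, the relations $E_w$ versus $\widetilde{E}_w$, and the values $D_w(f,g)$ versus $\widetilde{D}_w(f_w,g_w)$ all correspond under $f \mapsto f_w$, leaving the construction of the induced standard map via Standardization and the upgrade by Transfer implicit. You have simply spelled out those implicit steps, and your bookkeeping (well-definedness, standard representatives of standard classes, and the final Transfer) is sound.
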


\begin{proof}
For standard $f,g \in M^{\bN}$,
$f \in B_w$ iff $f_w \in \widetilde{B}_w$, 
$\la f, g \ra \in E_w $ iff $\la f_w, g_w \ra \in \widetilde{E}_w$ and $D_w(f,g) = \widetilde{D}_w(f_w,g_w)$.
\end{proof}
\end{remark}

\begin{remark}\label{remark6}
In the  construction of $(\widehat{M} , \widehat{D} \,)$ one can replace $\bN$ by any infinite set $I$, as long as $w \in I$ is  good.
If $\bbS_{w_1} \subseteq \bbS_{w_2}$, fix a standard function $h \in I_1^{I_2}$ such that $h(w_2) = w_1$ 
and define the standard mapping $H:  M^{I_1} \to M^{I_2}$ by $H(f) = f \circ h$.
Then $H$ is an embedding of $(B_{w_1},\, E_{w_1},\,D_{w_1})$ into $(B_{w_2},\, E_{w_2},\,D_{w_2})$
in an obvious sense, and it factors to an isometric embedding of $(B_{w_1}/ E_{w_1}, \,D_{w_1}/E_{w_1}) $ 
into  $(B_{w_2}/ E_{w_2}, \,D_{w_2}/E_{w_2}) $.
Given any good $w_1$ and $w_2$, let $w = \la w_1, w_2\ra$; then both 
$(\widehat{M}_{w_1} , \widehat{D}_{w_1} )$ and $(\widehat{M}_{w_2} , \widehat{D}_{w_2} )$
embed into $(\widehat{M}_{w} , \widehat{D}_{w} )$.
\end{remark}

\subsection {Completeness of the nonstandard hull }\label{proofnshull}

This subsection illustrates how one can work with the nonstandard hull as constructed in Subsection~\ref{nshulls}.

\begin{theorem} \label{nshull} $(\widehat{M}_w , \widehat{D}_w \,)$ is a complete metric space.
\end{theorem}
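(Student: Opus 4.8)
The plan is to reduce completeness to a statement about \emph{standard} Cauchy sequences via Transfer, and then to realize the limit of such a sequence by an overspill argument carried out \emph{inside} the subuniverse $\bbS_w$, so that the resulting limit point is again $w$-standard and hence names a genuine (standard) point of $\widehat{M}_w = B_w/E_w$. This mirrors the classical nonstandard-hull argument, where a single finite point is extracted to serve as a limit of a rapidly Cauchy sequence; the novelty is that everything must stay $w$-standard.

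First I would note that ``$(\widehat{M}_w, \widehat{D}_w)$ is complete'' is an $\in$-formula with standard parameters, so by Transfer it is equivalent to its $\st$-relativization; thus it suffices to show that every standard Cauchy sequence $\langle s_n : n \in \bN\rangle$ in $\widehat{M}_w$ has a limit. Since a Cauchy sequence with a convergent subsequence converges, and a suitable standard subsequence can be selected by Standardization, I may assume the sequence is rapidly Cauchy: $\widehat{D}_w(s_n, s_{n+1}) < 2^{-n}$ for all standard $n$. Using a standard section of the standard quotient map $B_w \to \widehat{M}_w$ (available by $\ZFC$ and Transfer), fix a standard sequence $\langle f^{(n)}\rangle$ of representatives with $s_n = (f^{(n)})_E$, and set $x_n = f^{(n)}(w)$. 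Because $\widehat{D}_w(s_n, s_{n+1}) = \sh(d(x_n, x_{n+1}))$, rapid Cauchyness yields $d(x_n, x_{n+1}) < 2^{-n}$ for every standard $n$.

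The crux is to manufacture a single $w$-standard finite point $y$ that serves as the limit. I would gather the $x_n$ into one $w$-standard object: the standard map $K(i) = \langle f^{(n)}(i) : n \in \bN\rangle$ gives $g := K(w) \in M^{\bN}$ with $g(n) = x_n$, and $g$ is $w$-standard by Proposition~\ref{propos}. The set $C = \{N : \forall n < N,\ d(g(n), g(n+1)) < 2^{-n}\}$ is then $w$-standard (it equals $c(w)$ for a standard $c$), is downward closed, contains $0$, and contains every standard natural. The key step is an overspill \emph{within} $\bbS_w$: either $C = \bN$, in which case $w$, being an unlimited integer, is good (Definition~\ref{good}) and supplies a $w$-standard nonstandard $\nu \in C$; or $\nu := \min(\bN \setminus C) - 1$ is $w$-standard (a standard function applied to the $w$-standard set $C$) and nonstandard (since all standard naturals lie in $C$). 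Putting $y := g(\nu)$, Proposition~\ref{propos} makes $y$ $w$-standard, and telescoping along $C$ gives $d(y, x_0) \le \sum_{k=0}^{\nu-1} d(g(k), g(k+1)) < 2$, so $y$ is a finite point; thus $y \in \bbB_w$, and since $B_w$ codes $\bbB_w$ there is a standard $h \in B_w$ with $h(w) = y$.

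Finally I would verify convergence. For standard $n$, $\widehat{D}_w(s_n, h_E) = \sh(d(x_n, y)) = \sh(d(g(n), g(\nu)))$, and telescoping from $n$ to $\nu$ (legitimate since $\nu \in C$ and $n < \nu$) bounds this by $2^{-(n-1)}$. Hence for every standard $\epsilon > 0$ there is a standard $N$ with $\widehat{D}_w(s_n, h_E) < \epsilon$ for all standard $n \ge N$, so by Transfer $\lim_n s_n = h_E$; completeness then follows from the Transfer reduction. The step I expect to be the main obstacle is precisely this overspill: an ordinary overspill would deliver only an \emph{internal} $\nu$, whose point $g(\nu)$ need not be $w$-standard and so would fail to name a point of $\widehat{M}_w$. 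The argument must therefore be run relative to the pair $(\bbS_w, \bbS)$, exploiting that $C$ is $w$-standard and that the operations used (least element, predecessor, evaluation) preserve $w$-standardness.
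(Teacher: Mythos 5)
Your proof is correct, and it reaches the same final configuration as the paper's (a $w$-standard finite limit candidate, turned into a standard element of $B_w$ by Representability, with Transfer upgrading the standard-indexed estimates to genuine convergence), but the key existence step is obtained by a genuinely different mechanism. The paper applies Countable Idealization into $w$-standard sets (Proposition~\ref{univ}(2)) to the nested conditions ``$n_k \le m$ and $d(f_n(w), f_m(w)) < \tfrac{1}{k+1}$ for $n_k \le n \le m$'' and takes $f_m(w)$ at the resulting unlimited $w$-standard index $m$. You instead bundle the representatives into a single $w$-standard element $g = K(w)$ of $M^{\bN}$, form the $w$-standard downward-closed set $C \subseteq \bN$ of indices up to which the rapid-Cauchy estimates hold, and extract an unlimited $w$-standard $\nu \in C$ by an overspill argument run relative to the pair $(\bbS_w, \bbS)$: either $C = \bN$ and goodness of $w$ (Definition~\ref{good}) supplies $\nu$, or $\nu = \min(\bN\setminus C) - 1$ is $w$-standard by Proposition~\ref{propos}(c) and unlimited because $C$ contains every standard natural and is downward closed. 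This is more elementary in that it uses only closure of $\bbS_w$ under standard functions plus the existence of a single $w$-standard unlimited natural, rather than the $\omega_1$-saturation of the ultrapower that underlies Proposition~\ref{univ}(2); the price is the preliminary reduction to a rapidly Cauchy subsequence, needed so that the countably many conditions collapse to membership in one downward-closed subset of $\bN$. The paper's Idealization route is the one that scales to the uniform-space version (Theorem~\ref{nshulls2}), where Cauchy nets and $\kappa^+$-good $w$ replace sequences and a well-ordering-based overspill is no longer available; your route buys a proof of this particular theorem from weaker principles. All the delicate points --- that $C$ and $\min(\bN\setminus C)-1$ are $w$-standard, that $\sh(d(x_n,x_{n+1})) < 2^{-n}$ yields $d(x_n,x_{n+1}) < 2^{-n}$ only for standard $n$, and that the telescoping over all $k < \nu$ is licensed by $\nu \in C$ rather than by standardness of the indices --- are handled correctly.
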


\begin{proof}
Let $\la F_n  \, \mid \, n \in \bN \ra$ be a standard Cauchy sequence in $(\widehat{M} , \widehat{D}\, )$;
we prove that it converges to some  $F \in \widehat{M}$.

Using the Axiom of Countable Choice we obtain a standard sequence $\la f_n  \, \mid \, n \in \bN \ra$ 
such that $F_n = (f_n)_E$ for all $n \in \bN$.

For $k \in \bN$ let $n_k$ be the least element of $\bN$ greater than or equal to $k$ such that
$$\forall m,n \; \left( n_k \le n\le m  \imp \widehat{D}(F_n, F_m) <\tfrac{1}{ k+1}\right) ;   $$
note that the sequence $\la n_k \,\mid\, k \in \bN \ra$ is standard.
From the definition of $\widehat{D}$ we obtain, for standard $k$,
$$\forall^{\st} m,n \; \left( n_k \le n \le m  \imp  d(f_n(w), f_m(w)) < \tfrac{1}{ k+1}\right)   .$$
Hence $\forall^{\st} k \;\exists^{\st} m \; \forall \ell \le k$
$$\left[\, n_{\ell} \le m \,\wedge\, 
\forall n \; \left( n_{\ell} \le n \le m  \imp  d(f_n(w), f_m(w)) < \tfrac{1}{ \ell+1} \right)\right]. $$

By Countable Idealization into $w$-standard sets we get
$$ \exists^{\st_{w}}  m \; \forall^{\st} k\,\left[\, n_{k} \le m \,\wedge\, 
\forall n \; \left( n_{k} \le n \le m  \imp  d(f_n(w), f_m(w)) < \tfrac{1}{ k+1} \right)\right]. $$

Fix such an $m$; clearly it is unlimited. Consider the standard point  $p = f_{n_0}(0)$.
We have $d(p, f_m(w)) \le d(f_{n_0}(0),  f_{n_0}(w) )    + d(f_{n_0}(w), f_m(w))$.
The first term on the right side of the inequality is limited because $f_{n_0} $ is standard and $f_{n_0} \in B$, and the second term is $< 1$ (take $k = 0$).
Hence  $f_m(w)$ is a finite element of $M$.

We note that $f_m(w)$ is $w$-standard; hence
 there is a standard function $f\in M^\bN$ such that $f(w) = f_m(w) $.
It follows that $f \in B$. We let $F = f_E \in \widehat{M}$.

We have that, for all standard $k$,
$$\forall^{\st}  n \; \left( n_{k} \le n   \imp  d(f_n(w), f(w)) < \tfrac{1}{ k+1} \right), $$
and hence 
$$\forall^{\st}  n \; \left( n_{k} \le n   \imp  \widehat{D}(F_n, F) \le \tfrac{1}{ k+1} \right).$$
This shows that the sequence $\la F_n  \, \mid \, n \in \bN \ra$ converges to $F$.
\end{proof}
The proof of Theorem~\ref{proofnshull}  goes through for any infinite set $I$ in place of $\bN$, as long as $w \in I$ is  good.


\subsection
{Nonstandard hulls of standard uniform spaces}\label{uniform}
We generalize the construction of nonstandard hulls in $\BST$ to uniform spaces.

Let $(M, \Delta)$  be a standard uniform space. That is, $M$ is a standard set and $\Delta$ is a standard family of pseudo-metrics on $M$ which endows $M$ with a Hausdorff uniform structure.
In a uniform space, $x \in M$ is \emph{finite} if for all standard $d \in \Delta$, $d(x,p)$ is limited for some (equivalently: for all) standard $p \in M$.
Points $x$ and $  y  $ are \emph{infinitely close} if $d(x,y) \simeq 0$ 
 for all standard $d \in \Delta$.

We fix a standard infinite set $I$ and a $w \in I$ so that  Idealization into $w$-standard sets  over standard sets of cardinality $\le \kappa$ holds for $\kappa = \max\{ |\Delta|, \aleph_0\}$  (see   Proposition~\ref{satuniv}).
A construction of the nonstandard hull of $(M, \Delta)$ can now proceed much as in Subsection~\ref{nshulls}.
We omit the subscripts indicating its dependence on $w$.

We let 
$B  = \, {}^{\st} \{ f \in M^I \,\mid\, f(w) \text{ is a finite point of }M\}$ and\\
$E  = \, {}^{\st} \{  \la f, g \ra \in B \times B \,\mid\, d(f(w), g(w)) \simeq 0 
\text{ for all standard } d \in \Delta \}.$

For each standard $d \in \Delta$  the standard function $D$ on $B \times B$, 
as well as $\widehat{M}$, $\widehat{D}$ and $c$, 
are defined as in Subsection~\ref{nshulls}.
Each $\widehat{D}$ is a pseudo-metric on $\widehat{M}$. 
We let $\widehat{\Delta} =\, {}^{\st} \{\widehat{D}  \,\mid\, d \in\Delta \cap \bbS\}$. 

\begin{theorem}\label{nshulls2}
The structure $(\widehat{M}, \widehat{\Delta})$ is a complete Hausdorff uniform space and 
 $c$ embeds $(M, \Delta)$ into  $(\widehat{M}, \widehat{\Delta})$.
\end{theorem}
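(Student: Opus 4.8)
The plan is to prove each of the four assertions — that $\widehat\Delta$ is a separating family of pseudo-metrics (so $(\widehat M,\widehat\Delta)$ is a Hausdorff uniform space), that $c$ is an embedding, and that the space is complete — by first establishing the corresponding statement for standard objects and then invoking Transfer. Since the pseudo-metric property of each $\widehat D$ is already recorded, the uniform-space and embedding claims are routine variants of the metric case, and I would dispose of them first. For \emph{Hausdorff} it suffices by Transfer to separate two standard points $F=f_E\ne g_E=G$ of $\widehat M$ with standard representatives $f,g$; since $\la f,g\ra\notin E$, the defining property of $E$ yields a standard $d\in\Delta$ with $d(f(w),g(w))\not\simeq 0$, whence $\widehat D(F,G)=\sh(d(f(w),g(w)))>0$ for the corresponding $\widehat D\in\widehat\Delta$. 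For the \emph{embedding}, I would check that for each standard $d\in\Delta$ and standard $x,y\in M$ one has $\widehat D(c(x),c(y))=\sh(d(x,y))=d(x,y)$, extend to all $x,y$ by Transfer, and deduce injectivity of $c$ from the Hausdorff property of $(M,\Delta)$.

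The heart of the proof is completeness, which parallels Theorem~\ref{nshull} but must accommodate the whole family $\Delta$ simultaneously; since a uniform space need not be first countable, I would argue with nets rather than sequences. By Transfer it suffices to show that an arbitrary \emph{standard} Cauchy net $\la F_i\mid i\in J\ra$, with $J$ a standard directed set, converges. Using Standardization I would fix, for each standard $d\in\Delta$ and standard $k\in\bN$, a standard threshold $i_0(d,k)\in J$ beyond which all $\widehat D$-distances are $<\tfrac{1}{k+1}$, together with a standard representative $g_{d,k}$ of the (standard) anchor point $F_{i_0(d,k)}$. The crucial device is to route every later estimate through these \emph{standard} anchors, so that the identity $\widehat D(f_E,g_E)=\sh(d(f(w),g(w)))$ — valid only when both representatives are standard — remains applicable. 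I would then apply Idealization into $w$-standard sets over sets of cardinality $\le\kappa$ (Proposition~\ref{satuniv}) to the index set $\Delta\times\bN$, whose cardinality is $\le\kappa=\max\{|\Delta|,\aleph_0\}$; this is exactly why $w$ was chosen $\kappa^+$-good. The left-hand side of the equivalence holds because a standard finite family of standard thresholds has, by Transfer, a standard common upper bound $j\in J$, and a standard representative of $F_j$ is then pointwise $\tfrac{1}{k+1}$-close at $w$ to each relevant $g_{d,k}$. The right-hand side furnishes a single $w$-standard $y_0\in M^I$ with $d(y_0(w),g_{d,k}(w))<\tfrac{1}{k+1}$ for all standard $(d,k)$.

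To produce the limit I would first check that $y_0(w)$ is a finite point of $M$: for standard $d$ and standard $p\in M$, the case $k=0$ gives $d(y_0(w),g_{d,0}(w))<1$, and since the standard point $g_{d,0}(w)$ is finite, the triangle inequality makes $d(y_0(w),p)$ limited — exactly as in the proof of Theorem~\ref{nshull}. Because $y_0(w)$ is also $w$-standard (Proposition~\ref{propos}(c)), Representability (Proposition~\ref{univ}(3)) supplies a standard $f\in M^I$ with $f(w)=y_0(w)$, and then $f\in B$; set $F:=f_E$. For standard $d$ and standard $i\succeq i_0(d,k)$ the triangle inequality gives $\widehat D(F_i,F)\le \widehat D(F_i,F_{i_0(d,k)})+\widehat D(F_{i_0(d,k)},F)$, where the first term is $<\tfrac{1}{k+1}$ by the choice of threshold and the second equals $\sh(d(g_{d,k}(w),y_0(w)))\le\tfrac{1}{k+1}$ by the previous paragraph; hence $\widehat D(F_i,F)\le\tfrac{2}{k+1}$. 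A Transfer step in $i$ (the relevant formula has only standard parameters) upgrades this to $\forall i\succeq i_0(d,k)\,\widehat D(F_i,F)\le\tfrac{2}{k+1}$, and a further Transfer in $d$ and $\epsilon$ yields genuine convergence $F_i\to F$ in $(\widehat M,\widehat\Delta)$; one last Transfer removes the standardness assumption on the net.

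The hard part will be the point that forces this careful choice of representatives rather than a verbatim copy of the sequence argument. In the proof of Theorem~\ref{nshull} one could replace $\sh(d(f_n(w),f_m(w)))$ by $\widehat D(F_n,F_m)$ because $n\le m$ with $m$ standard forces $n$ standard, keeping both arguments standard; for a general directed set $J$ this trick has no analogue, and $\widehat D$ evaluated at a nonstandard tail of the net cannot be converted into a pointwise bound at $w$. The remedy is precisely to anchor all distance estimates at the standard indices $i_0(d,k)$ and to idealize over $\Delta\times\bN$ — never over anything involving $J$ — so that the cardinality stays $\le\kappa$. Once these two bookkeeping constraints are respected, the remaining work (finiteness of $y_0(w)$, the triangle-inequality estimate, and the infinitesimal manipulations converting strict shadow inequalities into strict pointwise ones) is routine.
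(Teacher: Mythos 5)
Your proof is correct, and while it shares the paper's overall skeleton (Transfer down to a standard Cauchy net, Cauchy thresholds, Idealization into $\bbS_w$, Representability, and a triangle-inequality estimate for convergence), the key saturation step is executed differently. The paper idealizes for a remote $w$-standard \emph{index} $\mu\in\Lambda$ lying beyond all standard thresholds $\lambda^d_k$ and satisfying $d(f_\lambda(w),f_\mu(w))<\tfrac1{k+1}$ for all standard $\lambda\ge\lambda^d_k$; the candidate limit is then $f_\mu(w)$. You instead idealize for a $w$-standard \emph{point} of $M$ that is $\tfrac1{k+1}$-close to the standard anchor values $g_{d,k}(w)$, and route every later estimate through those anchors. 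The practical difference lies in the index set of the Idealization: the paper's right-hand side quantifies over standard $\lambda\in\Lambda$ as well, so it implicitly ranges over a set of cardinality $\max\{\aleph_0,|\Delta|,|\Lambda|\}$, while the $w$ fixed at the start of the subsection only guarantees Idealization up to $\kappa=\max\{|\Delta|,\aleph_0\}$; your version idealizes over $\Delta\times\bN$ alone, so the stated $\kappa$ genuinely suffices and no constraint is placed on the directed set. That is a real advantage of your route (the paper's, as written, needs either a larger $\kappa$ or a preliminary reduction to Cauchy nets of bounded character), bought at the price of the extra bookkeeping of the anchors $g_{d,k}$, which must be packaged by Standardization into a single standard function before they can serve as a parameter of the idealized formula --- you gesture at this and it does work. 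Your explicit treatment of the Hausdorff and embedding clauses, which the paper's proof leaves to the reader, is also correct.
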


\begin{proof}
The proof follows the lines of the proof of Theorem~\ref{nshull}; the main difference is that Cauchy sequences have to be replaced by Cauchy nets.

Let $\la \Lambda, \le \ra$ be a standard directed set and $\la F_{\lambda} \,\mid\, \lambda \in \Lambda \ra$ a standard Cauchy net indexed by $\Lambda$.
Using the Axiom of Choice  one obtains a standard net $\la f_{\lambda} \,\mid\, \lambda \in \Lambda \ra$ 
such that $f_{\lambda} \in F_{\lambda}$ holds for all $\lambda$; then
$\la f_{\lambda}(w) \,\mid\, \lambda \in \Lambda \ra$ is  a $w$-standard net.

The Cauchy property implies that for every standard $d\in \Delta$ there is a standard sequence $\la \lambda_k^d \,\mid \,k \in \bN \ra$
of elements of $\Lambda$ such that $\lambda_k^d \le \lambda_{k+1}^d $ holds for all $k$ and 
$$\forall k\,\forall \lambda, \mu \in \Lambda \; \left( \lambda_k^d \le \lambda \le \mu  \imp \widehat{d}(F_{\lambda}, F_{\mu}) <\tfrac{1}{ k+1}\right) .$$

Hence
$$\forall^{\st} k\,\forall^{\st} \lambda, \mu \in \Lambda \; \left( \lambda_k^d \le \lambda \le \mu  \imp d(f_{\lambda}(w), f_{\mu}(w)) <\tfrac{1}{k+1}\right).$$
We conclude that for every standard $k \in \bN$, every standard finite $\Delta_0 \subseteq \Delta$ and every standard finite $\Lambda_0 \subseteq \Lambda$ 
there is a standard $\mu \in \Lambda$ such that for  all $\ell \le k$, all $d \in \Delta_0$ and all $\lambda \in \Lambda_0$
$$  \lambda_{\ell}^d \le \mu \,\wedge\, 
 \left( \lambda_{\ell}^d \le \lambda    \imp  d(f_{\lambda}(w), f_{\mu}(w)) < \tfrac{1}{\ell+1} \right). $$
By Idealization into $w$-standard sets over sets of cardinality $\le \kappa = \max\{ |\Delta|, \aleph_0\}$ we get a $w$-standard $\mu \in \Lambda$ such that for  all standard $ k$,
all standard $ d \in \Delta$ and all standard $\lambda \in \Lambda$
$$  \lambda_k^d \le \mu \,\wedge\, 
\left( \lambda_k^d \le \lambda    \imp  d(f_{\lambda}(w), f_{\mu}(w)) < \tfrac{1}{k+1} \right). $$

Fix such a $\mu$; as in Subsection~\ref{nshull}
we have that $d(p, f_{\mu}(w))$ is limited for every standard $d \in \Delta$, ie, $f_{\mu}(w)$ is a finite point of $M$.

By Representability there is a standard function $f \in M^I$ such that  $f(w) = f_{\mu}(w)$.
It follows that $f \in B$. Let $F = f/E \in \widehat{M}$.

We have that, for all standard $d \in \Delta$ and $k \in \bN$,
$$\forall^{\st}  \lambda \in \Lambda \; \left( \lambda_{k}^d \le \lambda   \imp  d(f_{\lambda}(w), f(w)) < \tfrac{1}{k+1} \right) , $$
and hence 
$$\forall^{\st}  \lambda \in \Lambda\; \left( \lambda_{k}^d \le \lambda    \imp  \widehat{D}(F_{\lambda}, F) \le \tfrac{1}{k+1}\right).$$
This shows that the net $\la F_{\lambda}  \, \mid \, \lambda \in \Lambda \ra$ converges to $F$.
\end{proof}

\subsection{Internal normed vector spaces} \label{inthulls}

Under many circumstances the type of construction carried out in Subsections~\ref{nshulls},~\ref{uniform}
generalizes to internal structures. We illustrate it in the case of normed vector spaces.


Let $M$ be an internal normed vector space over $\bR$.
This means that $M$, the operations of addition $+$ on $M \times M$ and scalar multiplication $\cdot$  on $\bR \times M$, and the $\bR$-valued norm $ \| . \| $ on $M$, are (internal) sets  and satisfy the usual properties.
In order to be able to apply our coding technique 
we fix a standard set $I$ and a good $w \in I$ so that the set $M$, the above operations and the norm belong to $\bbS_w$. Other parameters relevant to a particular investigation can also be made to belong to $\bbS_w$.
By Transfer in $(\bbV, \bbS_w, \in)$, the properties of these objects that are expressible by $\in$-formulas continue to hold in $\bbS_w$, so we can carry out the desired construction ``over $\bbS_w$'' rather than ``over $\bbV$.''

We first describe the external construction.
Let $$\bbB_w = \pmbaa x \in M \cap\, \bbS_w \,\mid\, \|x\| \text{ is limited } \pmbbb \text{ and }   
\bbE_w = \pmbaa x \in M \cap \,\bbS_w \,\mid\, \|x\| \simeq 0 \pmbbb.$$
It is clear that $\bbB_w$ is an external vector space over the external field $\bR \cap \bbS$ and $\bbE_w$ is its subspace.
Define an external equivalence relation $\approx$ on $\bbB_w$ by $x \approx y \eqi x - y \in \bbE_w \eqi \| x-y \| \simeq 0$.
Obviously, for $x_1, x_2, y_1, y_2 \in \bbB_w$ and $c \in \bR \cap \bbS$ we have
$x_1 \approx y_1 \,\wedge\, x_2 \approx y_2 \imp x_1 + x_2 \approx y_1 + y_2$, 
$c\cdot x_1 \approx c\cdot y_1$ and $\|x_1\| \simeq \|y_1\|$.
If external collections of external sets were available in $\BST$, one could now form the quotient space $\bbB_w/\bbE_w$ with the norm $\|x_{\bbE_w}\| = \sh(\|x\|)$, which would then be an external normed vector space over the external field $\bR \cap \bbS$.
This construction is of course not possible in $\BST$ directly, but the coding introduced in Subsection~\ref{coding} enables us to carry it out and produce a standard normed metric space which, when viewed from the standard point of view, is (externally) isomorphic to $\bbB_w/\bbE_w$.

It follows from  Representability that there is a standard function $\la (M_i, \,+_i, \, \cdot_i) \,\mid\, i \in I \ra$ such that $(M_w, \,+_w, \, \cdot_w) = (M, \,+, \, \cdot)$
and, for all $i \in I$,  $(M_i, \,+_i, \, \cdot_i)$ is a vector space over $\bR$.
Let 
$$B_w = {}^{\st} \{f  \in \Pi_{i \in I} M_i \,\mid\, \|f(w)\| \text{ is limited} \} ;$$
$$E_w =  {}^{\st} \{f  \in \Pi_{i \in I} M_i \,\mid\, \|f(w)\| \simeq 0\}.$$
Addition and scalar multiplication on $B_w$ are defined pointwise: 
$$ (f+ g)(i) = f(i) +_i g(i) \text{\qquad and \qquad} (c \cdot  f)(i) = c \cdot_i f(i)$$
for $f, g \in B_w$, $c \in \bR$, and  all $i \in I$.
By Standardization, there is a standard function $\|.\|$ such that for all standard $f \in B_w$ we have 
$\|f\| = \sh(\|f(w)\|)$.

It is routine to verify that $B_w$ is a standard vector space over $\bR$, $\|.\|$ is a pseudo-norm on $B_w$, and $f \in E_w \eqi \|f\| = 0$.
The quotient $\widehat{E}_w = B_w/E_w$ is thus a well-defined standard normed vector space. 
The proof given in Subsection~\ref{proofnshull}  shows that $\widehat{E}_w = B_w/E_w$
is  complete.


\subsection
{Loeb measures} \label{loeb}

Let $(\Omega, \cA, \mu)$ be an internal finitely-additive measure space, with $\Omega \subseteq O$ for a standard set $O$. As discussed in Subsection~\ref{notloeb},  attempts to construct its Loeb extension
``over $\bbV$''  are only partially successful in $\BST$.
We fix a standard set $I$ and a good $w\in I$ so that $\Omega, \cA , \mu $ are $w$-standard.
By Transfer, $(\Omega, \cA, \mu)$ is an internal finitely-additive measure space in the sense of $\bbS_w$, and we construct the Loeb extension ``over $\bbS_w$.'' 
We usually do not  indicate the dependence on the choice of $w$.
In this example it is convenient to employ the variant of coding from Definition~\ref{coding2}.

For every  $X \in \cA\cap \bbS_w$ let 
\begin{equation}
[X] =\, {}^{\st}\{ f_w \,\mid\, f \in O^I \,\wedge\, f(w) \in X \cap \bbS_w \}.
\end{equation}
If $X, X_1, X_2 \in \cA\,\cap \,\bbS_w$, then the equivalences 
$ f_w \in [X_1 \cap X_2] $ iff $f_w \in [X_1]\,\wedge\, f_w \in [X_2]$ and 
$f_w \in [\Omega \setminus X]$ iff $f_w\in [\Omega] \,\wedge\,f_w \notin [X]$
hold for all standard $f \in O^I$.
By Transfer, $  [X_1 \cap X_2] = [X_1]\cap [X_2]$ and 
$ [\Omega \setminus X]= [\Omega] \setminus [X]$.
Let 
$$\ccB =\, {}^{\st} \{ A \in \cP(O^I)\,\,\mid\,  A = [X] \text{ for some }  X \in \cA \cap \bbS_w\} .$$

Using Transfer again, it follows that $\ccB$ is a standard algebra of subsets of $[\Omega]$.
We note that $X_1, X_2 \in \cA \cap \bbS_w$, $X_1 \ne X_2$, implies $[X_1] \ne [X_2]$, so for standard $A \in \ccB$ there is a unique $X \in \cA \cap \bbS_w$ with $A = [X]$.

A standard finitely additive measure $m$ on the algebra $\ccB$ with values in the interval $[0, +\infty]$ is determined by the requirement that for standard $A = [X] \in \ccB$
$$m(A) = \sh(\mu(X))  .$$

The measure space $([\Omega], \ccB, m)$ satisfies  Carath\'{e}odory's condition.

\begin{lemma}
If $\la A_k \,\mid\, k \in \bN\ra$ is a sequence of mutually disjoint  sets in $\ccB$, $A \in \ccB$, and 
$A = \bigcup_{k \in \bN}\, A_k$, then $m(A) = \Sigma_{k \in \bN}\; m(A_k)$.
\end{lemma}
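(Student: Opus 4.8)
The plan is to prove this Carathéodory-type countable additivity statement by exploiting the finite-additivity of $m$ together with the $w$-standardness machinery, reducing the infinite-sum condition to a statement that can be decided by Transfer and Idealization into $w$-standard sets. First I would note that since $\langle A_k \mid k \in \bN\rangle$ need not be standard, I should apply the Axiom of Countable Choice to obtain a standard sequence $\langle X_k \mid k \in \bN\rangle$ of sets in $\cA \cap \bbS_w$ with $A_k = [X_k]$, and similarly fix $X \in \cA \cap \bbS_w$ with $A = [X]$ (using that $[\,\cdot\,]$ is injective on $\cA \cap \bbS_w$). Because $[X_j \cap X_k] = [X_j] \cap [X_k] = \emptyset$ for $j \ne k$ and $[\,\cdot\,]$ preserves intersections, the $X_k$ are pairwise disjoint \emph{as seen by the code}; by the injectivity and Transfer this should translate to $\mu(X_j \cap X_k) \simeq 0$, i.e. the $X_k$ are ``almost disjoint'' in $\bbS_w$, which is enough for the finitely-additive estimates on $\mu$.

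Next I would establish the easy inequality $\Sigma_{k} m(A_k) \le m(A)$. Since $m$ is finitely additive and monotone, for every standard $N$ we have $\sum_{k \le N} m(A_k) = m(\bigcup_{k \le N} A_k) \le m(A)$, where the union is a finite union of elements of $\ccB$ and hence in $\ccB$. Passing to the standard supremum over $N$ gives $\Sigma_k m(A_k) \le m(A)$, which handles one direction with no use of idealization. The reverse inequality is the substantive part. Here I would argue that the crucial point is the existence of a \emph{$w$-standard} finite stage that exhausts $A$ up to an infinitesimal: the assumption $A = \bigcup_{k \in \bN} A_k$ is a statement about the codes, so after unpacking via the isomorphism with the $w$-standard picture it should say that $[X] = \bigcup_{k} [X_k]$, forcing every $w$-standard point of $X$ to lie in some $X_k$.

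The main obstacle, as I see it, is the reverse inequality $m(A) \le \Sigma_k m(A_k)$, and the key device is \emph{Countable Idealization into $w$-standard sets} (Proposition~\ref{univ}(2), available because $w$ is good). The idea is to show that for every standard $k$ the ``tail'' $X \setminus \bigcup_{j \le k} X_j$ has $\mu$-measure close to $\mu(X) - \sum_{j \le k}\mu(X_j)$, and that this tail must eventually have infinitesimal measure. Concretely, I would formulate the finitary approximations: for each standard $k$ there is a $w$-standard partial union $Y_k = X_0 \cup \cdots \cup X_k \in \cA \cap \bbS_w$ with $\mu(Y_k) \simeq \sum_{j \le k} m(A_j)$, and use that $\bigcup_k [X_k] = [X]$ to conclude that no $w$-standard point of $X$ escapes all the $X_j$. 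Applying Countable Idealization then yields a single $w$-standard index $\nu$ such that $X \setminus Y_\nu$ has no standard points carrying positive standardized measure, whence $\mu(Y_\nu) \simeq \mu(X)$. Since $\mu(Y_\nu) = \sum_{j \le \nu}\mu(X_j)$ by finite additivity in $\bbS_w$ and $\sh$ is monotone, taking shadows gives $m(A) = \sh(\mu(X)) \le \Sigma_k m(A_k)$. Combining the two inequalities yields $m(A) = \Sigma_k m(A_k)$. I expect the delicate bookkeeping to lie in passing correctly between the code level (where $m$ and $\ccB$ live as standard objects) and the $w$-standard level (where $\mu$, $X$, and the $X_k$ live), and in verifying that the ``saturation'' supplied by Countable Idealization genuinely produces a $w$-standard $\nu$ absorbing all standard tails simultaneously.
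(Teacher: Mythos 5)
Your first inequality $\Sigma_k m(A_k)\le m(A)$ is fine, and your instinct to feed the fact ``every $w$-standard point of $X$ lies in some $X_k$ with $k$ standard'' into Countable Idealization is exactly the right one. But the way you then close the reverse inequality has a genuine gap. Suppose you do obtain a $w$-standard $\nu$ with $\mu(Y_\nu)\simeq\mu(X)$, where $Y_\nu=X_0\cup\dots\cup X_\nu$ (already a delicate point: the predicate ``$X\setminus Y_\nu$ contains no $w$-standard point'' is not an $\in$-formula, so it cannot be idealized directly, and forming $Y_\nu$ for nonstandard $\nu$ requires first extending $k\mapsto X_k$ to a $w$-standard sequence whose values at nonstandard indices you do not control). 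Then $\sh(\mu(Y_\nu))=\sh\bigl(\sum_{j\le\nu}\mu(X_j)\bigr)$, and for unlimited $\nu$ this shadow \emph{dominates} the external sum $\Sigma_k\,\sh(\mu(X_k))$ over standard $k$ --- it is not dominated by it. Mass can sit in the terms with nonstandard index $j\le\nu$, so the inequality you need, $m(A)\le\Sigma_k m(A_k)$, does not follow; ruling out that hidden mass is precisely the content of the lemma, so the argument as written is circular at this point.

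The paper's proof avoids this entirely by proving a stronger, purely combinatorial fact: the union must be \emph{finite}. Assuming that for every standard $n$ some $w$-standard $x\in X$ avoids $X_0,\dots,X_n$, Countable Idealization into $\bbS_w$ (applied to the $\in$-formula $x\in X\wedge x\notin X_k$) produces a single $w$-standard $x\in X$ avoiding $X_k$ for all standard $k$; representing $x=g(w)$ gives $g_w\in A$ with $g_w\notin A_k$ for all $k$ (by Transfer), contradicting $A=\bigcup_k A_k$. Hence some standard $n$ satisfies $\forall^{\st_w}x\in X\,\exists k\le n\,(x\in X_k)$, so $A=\bigcup_{k\le n}A_k$ and the countable sum collapses to a finite sum, where finite additivity of $m$ finishes the proof. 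Two smaller points: disjointness of the codes $[X_j]\cap[X_k]=\emptyset$ yields genuine disjointness $X_j\cap X_k=\emptyset$ (a nonempty $w$-standard set has a $w$-standard element, by Transfer into $w$-standard sets), not merely $\mu(X_j\cap X_k)\simeq 0$; and you should first reduce to standard $\la A_k\,\mid\,k\in\bN\ra$ and $A$ by Transfer, as the paper does, before extracting the $X_k$.
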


\begin{proof}
In view of Transfer, it suffices to prove this claim under the assumption that $\la A_k \,\mid\, k \in \bN\ra$ and $A$ are standard.

Suppose $A = [X]$ where $X  \in \cA \cap \bbS_w$ and for each standard $k$, $A_k = [X_k]$ where $X_k   \in \cA \cap \bbS_w$.
Clearly $X_k $ are mutually disjoint  and $X_k  \subseteq X$  for all standard $k$.

Assume that  for every standard $n$ there is a $w$-standard  $x \in X$ such that $x \notin X_k$  holds for all $k \le n$.
By Countable Idealization into $\bbS_w$ there is a $w$-standard  $x \in X$ such that $x \notin X_k$  holds for all standard $k$.
By Representability, $x = g(w)$ for some standard $g  \in O^I$. 
Then $g_w \in A$ but $\forall^{\st} k \, (g_w \notin A_k)$ and,
by Transfer, $\forall k \, (g_w \notin A_k)$.
This is a contradiction.

Therefore there is a standard $n$ such that $\forall^{\st_w} x \in X
\, \exists k \le n \, (x \in X_k)$.  It follows that $\forall^{\st} F
\in A \, \exists k \le n \, (F \in A_k)$.  By Transfer, $ \bigcup_{k
  \in \bN} \, A_k= \bigcup_{k \le n} \, A_k$ and, by finite additivity
of $m$, we obtain $m(A) = \Sigma_{k \le n}\; m(A_k) = \Sigma_{k \in
  \bN}\; m(A_k)$.
\end{proof}

We conclude that $m$ can be extended to a $\sigma$-additive measure $\overline{m}$ 
with values in $[0, +\infty]$
on the $\sigma$-algebra  $\overline{\ccB}$   generated by $\ccB$. The measure-theoretic completion of $([\Omega],  \overline{\ccB}, \overline{m})$ 
 is the desired Loeb measure space; 
we denote it  $([\Omega], \widehat{\ccB}, \widehat{m})$ 
(of course, it depends on the choice of $w$).
Instead of an appeal to the Carath\'{e}odory's theorem, a direct proof along the lines of~\cite{H3} can be given; see also Albeverio et al.~\cite{A}, Remark 3.1.5 and the references therein.

\begin{remark}
In order to explicate the relationship of $([\Omega], \widehat{\ccB}, \widehat{m})$ to the usual external Loeb  measure space, we first recall that $\bbS_w$ satisfies the statement that
$(\Omega, \cA, \mu)$ is a finitely-additive measure space.
The Loeb construction carried out over $\bbS_w$ would start with the external finitely additive measure space $(\boldsymbol{\Omega}, \boldsymbol{\cA}, \mathbf{m} )$, where $\boldsymbol{\Omega} = \Omega \cap \bbS_w$, $\boldsymbol{\cA} = \pmbaa X \cap \bbS_w \,\mid\, X  \in \cA \cap \bbS_w\pmbbb$, and  $ \mathbf{m}(X \cap \bbS_w) = \sh(\mu(X))$
for $X \in \boldsymbol{\cA}$.
This space would be extended to an external $\sigma$-additive measure space using the Carath\'{e}odory's theorem, and then completed. 
We now take $V = \Omega \cup \bR$, say, and note that 
for $X \in \cA \cap \bbS_w$,
$[X]$ is a $w$-code of $X \cap \bbS_w$, and hence  $\ccB$ is a $w$-code for $\boldsymbol{\cA}$.
It follows that  $([\Omega],  \ccB, m)$ is a $w$-code for $(\boldsymbol{\Omega}, \boldsymbol{\cA}, \mathbf{m} )$. The above construction cannot be carried out in $\BST$ directly for the  external measure space $(\boldsymbol{\Omega}, \boldsymbol{\cA}, \mathbf{m} )$, but presents no difficulties for its  $w$-code $([\Omega],  \ccB, m)$, a standard finitely-additive measure space.
\end{remark}

\begin{remark}
We compare  Loeb measure spaces obtained from the same $(\Omega, \cA, \mu)$ for different choices of the parameter $w$.
We use subscripts to indicate dependence on this parameter and fix good
 $w \in I$, $z \in J$ where $I,J$ are standard and $\bbS_w \subseteq \bbS_z$.

\begin{proposition} \label{embedding}
There is a standard embedding $\widehat{H} = \widehat{H}_{w,z}$ of $\widehat{\ccB}_w$ into  $\widehat{\ccB}_z$ that  preserves  complements and countable unions and restricts to an isomorphism of $\ccB_w$ and $\ccB_z$. 
If $m_w$ is $\sigma$-finite, then also $\widehat{m}_w (B) = \widehat{m}_z (\widehat{H} (B) )$ for all $B \in \widehat{\ccB}_w$.
\end{proposition}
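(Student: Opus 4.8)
The plan is to assemble $\widehat{H}$ in four stages --- on the finitely additive algebra, on the generated $\sigma$-algebra, on the measure, and on the completion --- and to reduce each stage to the two engines available here: Standardization, to produce the requisite standard maps, and Transfer, to pass from standard generators to all elements. The only structural consequence of $\bbS_w \subseteq \bbS_z$ that I would use is that every $w$-standard $X \in \cA$ is simultaneously $z$-standard, so that for a single internal set $X$ both codes $[X]_w \in \ccB_w$ and $[X]_z \in \ccB_z$ are defined.

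First I would define $\widehat{H}$ on the standard elements of $\ccB_w$ by $[X]_w \mapsto [X]_z$. For standard $A \in \ccB_w$ there is, as recorded just after the definition of $\ccB$, a unique $X \in \cA \cap \bbS_w$ with $A = [X]_w$; this $X$ is $z$-standard, so $[X]_z$ is a standard element of $\ccB_z$, and the assignment is governed by an $\st$-$\in$-formula in the parameters $w,z,\cA$ that picks out a unique standard value for each standard $A$. Standardization then yields a standard function $\widehat{H}\colon \ccB_w \to \ccB_z$. The identities $[X_1 \cap X_2] = [X_1]\cap[X_2]$ and $[\Omega \setminus X] = [\Omega]\setminus[X]$, already established on both sides, together with $X_1 \ne X_2 \imp [X_1] \ne [X_2]$, show on standard elements that $\widehat{H}$ preserves intersections and complements, sends $[\Omega]_w$ to $[\Omega]_z$, and is injective; Transfer promotes all of this to every element, so $\widehat{H}$ restricts to the asserted isomorphism of $\ccB_w$ onto its image in $\ccB_z$.

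To reach the $\sigma$-algebras I would first invoke the preceding Lemma, whose real content is that a countable union of members of $\ccB$ that again lies in $\ccB$ collapses to a finite one; hence $\widehat{H}$ already respects the only countable joins visible inside $\ccB_w$, i.e. it is a $\sigma$-homomorphism on the algebra. I would extend it to a standard $\sigma$-homomorphism $\widehat{H}\colon \overline{\ccB}_w \to \overline{\ccB}_z$ preserving complements and countable unions by transporting the construction through the coding hierarchy $\widetilde{H}_n$ of Subsection~\ref{coding}, using that the coding there converts external countable unions of generators into internal unions of their standard codes; this is the step whose well-definedness requires genuine care. For the measure, the computation on the algebra is forced: $m_w([X]_w) = \sh(\mu(X)) = m_z([X]_z) = m_z(\widehat{H}([X]_w))$, since the internal set $X$, and therefore $\mu(X)$, is shared by both sides, whence $m_z \circ \widehat{H} = m_w$ on $\ccB_w$. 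Now $B \mapsto \overline{m}_z(\widehat{H}(B))$ is a $\sigma$-additive measure on $\overline{\ccB}_w$, because $\widehat{H}$ carries countable disjoint unions to countable disjoint unions, and it agrees with $m_w$ on the generating algebra; when $m_w$ is $\sigma$-finite the Carath\'{e}odory extension is unique, so this measure equals $\overline{m}_w$. Finally, measure preservation on $\overline{\ccB}_w$ forces $\overline{m}_w$-null sets into $\overline{m}_z$-null sets, which lets me extend $\widehat{H}$ across the completions $\widehat{\ccB}_w \to \widehat{\ccB}_z$ while retaining all three properties.

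I expect the genuine obstacles to be two, both downstream of the algebra. The first is the well-definedness of the $\sigma$-homomorphism extension as an honest standard map of concrete $\sigma$-algebras: there is in general no underlying point map inducing $\widehat{H}$ by images or preimages, and a countable $\sigma$-relation among generators valid over $\bbS_w$ need not persist over $\bbS_z$ except modulo null sets, so I would control the extension through the codes rather than externally. The second is the indispensability of $\sigma$-finiteness in the measure clause: it is exactly what makes the pullback $\overline{m}_z \circ \widehat{H}$ coincide with the Carath\'{e}odory measure $\overline{m}_w$ one began with, and hence what lets the equality survive into the completion; without it the two $\sigma$-additive extensions of $m_w$ could differ and the stated identity would fail. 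The purely Boolean assertions, by contrast, I expect to reduce by Transfer to the routine verifications on standard elements sketched above.
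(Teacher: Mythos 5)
Your treatment of the generating algebra and of the measure clause matches the paper's: on standard elements $\widehat{H}([X]_w)=[X]_z$, the agreement $m_w=m_z\circ\widehat{H}$ on $\ccB_w$ is immediate because the internal set $X$ and hence $\mu(X)$ is shared, and the $\sigma$-finite case is settled by uniqueness of the Carath\'eodory extension, exactly as in the paper. The gap is at the step you yourself flag as requiring ``genuine care'': you never actually produce $\widehat{H}$ on $\overline{\ccB}_w$, let alone on $\widehat{\ccB}_w$. An abstract extension of a $\sigma$-homomorphism from the generating algebra is not available here, and the preceding Lemma only says that the countable joins visible \emph{inside} $\ccB_w$ collapse to finite ones --- it gives you no purchase on the genuinely new sets of $\overline{\ccB}_w$. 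The paper sidesteps the extension problem entirely by defining $\widehat{H}$ in one stroke on every standard $B\in\widehat{\ccB}_w$ as $\widehat{H}(B)=\widetilde{\Psi}_z\bigl(\widetilde{\boldsymbol{\Psi}}^{-1}_w(B)\bigr)$: decode $B$ to the external set of $w$-standard points it codes, observe that this is literally the same external subset of $O\cap\bbS_z$ because $\bbS_w\subseteq\bbS_z$, and re-encode with respect to $z$. There is then nothing to extend and no well-definedness issue; preservation of complements and countable unions is inherited from the corresponding properties of the coding $\widetilde{\Psi}$ established in Subsection~\ref{coding} (Proposition~\ref{rulesforpsi} and the paragraph following it), and the image lands in $\widehat{\ccB}_z$ because the class of $B$ with $\widehat{H}(B)\in\widehat{\ccB}_z$ contains $\ccB_w$ and is closed under the preserved operations. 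This decode--recode definition is the one idea your proposal is missing, and it is what turns your plan into a proof.

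Two smaller points. Your worry that a countable union relation among generators ``need not persist over $\bbS_z$ except modulo null sets'' evaporates under the paper's definition: if $B=\bigcup_{n}B_n$, the decoded external sets satisfy the same union relation as subsets of $O\cap\bbS_w\subseteq O\cap\bbS_z$, and recoding by $\widetilde{\Psi}_z$ preserves it verbatim --- this is exactly the computation the paper writes out for countable unions. Second, you conclude only that $\widehat{H}$ restricts to an isomorphism of $\ccB_w$ ``onto its image,'' whereas the statement asserts an isomorphism of $\ccB_w$ with $\ccB_z$; the paper argues ontoness on standard elements (the ``vice versa'' direction of $A=[X]_w\eqi\widehat{H}(A)=[X]_z$) and then applies Transfer, so your version as written proves slightly less than is claimed.
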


\begin{proof}
By Standardization, there is a standard function $\widehat{H}: \widehat{\ccB}_w \to \widehat{\ccB}_z$ such that
$\widehat{H} (B) =  \widetilde{\Psi}_z (\widetilde{\boldsymbol{\Psi}}^{-1}_w (B))$
for standard $B \in \widehat{\ccB}_w$.
For$A \in \ccB_w \cap\, \bbS$, if $A  = [X]_w$ for $X \in \cA \cap \bbS_w$ then $\widehat{H} (A) = [X]_z \in \ccB_z\cap \bbS$, and vice versa. This shows that $\widehat{H} $ maps $\ccB_w \cap \bbS$ onto $\ccB_z \cap \bbS$, and hence, by Transfer, $\ccB_w$ onto $\ccB_z$.

 In Subsection~\ref{coding}  we point out that the coding $\widetilde{\Psi}$ preserves complements and countable unions, so the same holds for $\widehat{H}$.
We give some details for the countable unions.
Let $B = \bigcup_{n \in \bN } B_n$, where $\la B_n \,\mid\,n \in \bN\ra$ is standard and
$B, B_n \in \widehat{\ccB}_w$ for all $n \in \bN$.
Define the external set $\bbX =\pmbaa \la n, x \ra \,\mid\, x \in \widetilde{\boldsymbol{\Psi}}^{-1}_w (B_n)\pmbbb \subseteq 
(\bN \cap \bbS) \times O$, so that $B_n = \widetilde{\Psi}_w (\bbX_n)$.
We then have 
$B = \widetilde{\Psi}_w(\bigcup_{n \in \bN \cap \bbS} \bbX_n) $, so 
$ \widetilde{\boldsymbol{\Psi}}^{-1}_w (B) =\bigcup_{n \in \bN \cap \bbS} \bbX_n $ and 
$\widehat{H} (B) =  \widetilde{\Psi}_z (\widetilde{\boldsymbol{\Psi}}^{-1}_w (B) )=  
\widetilde{\Psi}_z (\bigcup_{n \in \bN \cap \bbS} \bbX_n ) =
 \bigcup_{n \in \bN} \widetilde{\Psi}_z(\widetilde{\boldsymbol{\Psi}}^{-1}_w (B_n))  =
 \bigcup_{n \in \bN} \widehat{H} (B_n)$.

It is clear from the definition of $m(A)$ that $m_w(A) = m_z(\widehat{H}(A))$ holds for standard $A \in \ccB_w$, and hence by Transfer, for all $A \in \ccB_w$.
If $m_w$ is $\sigma$-finite, then  the completed Carath\'{e}odory's measure $\widehat{m}_w$ is uniquely determined. 
If $\widehat{m}_w (B) \neq \widehat{m}_z (\widehat{H} (B) )$ for some $B \in \widehat{\ccB}_w$, then 
$\widehat{m}_w $ and $\widehat{m}_z \circ\widehat{H}$ would be two distinct extensions of $m_w$ 
 from $\ccB_w$ to $\widehat{\ccB}_w $, a contradiction.
\end{proof}
\end{remark}

\subsection
{Lebesgue measure from Loeb measure}\label{lebesgue}

As is well known, the Lebesgue measure  can be obtained from a suitable Loeb measure.%
\footnote{\label{f6}A nonstandard construction of the Lebesgue measure can also be carried out  without using a Loeb measure as an intermediate step.  
In the axiomatic approach, one method for doing so is developed by  Lyantse and Kudryk~\cite{LK}, Appendix A, in the framework of $\IST$.
Another way is implicit in Hrbacek~\cite{H3} and explicitly presented  in  Hrbacek and Katz~\cite{HK} in the framework of $\SCOT$, a subtheory of $\IST$ and $\BST$ that conservatively extends $\ZF$ + Dependent Choice.
For a ``radically elementary'' approach see Cartier and Perrin~\cite{CP1, CP2}.
} 
We give the gist of the argument  in our framework, for the interval $[0, 1]$.
More details can be found in Albeverio at al.~\cite{A} (using a model-theoretic approach).

For $n \in \bN$ let $\cT_n = \{ i /n \,\mid\, 0 \le i \le n\}$.
Fix a nonstandard integer $w \in \bN$, and let $\cT = \cT_{w}$
(in  model-theoretic frameworks the equivalent of $\cT$  is called ``hyperfinite time line'').
Let $O = [0,1]$, $\Omega = \cT$, $\cA = \cP(\cT)$ and $\mu$  the counting measure on $\cT$, ie, $\mu(X) = |X|/ |\cT|$ for all sets $X \subseteq \cT$.
The construction in the preceding subsection, with $I =\bN$,  yields the Loeb measure 
$([\Omega], \widehat{\ccB}, \widehat{m})$.

For every standard $A \subseteq [0,1]$ define
$${}^{\bullet}\!\!A  =\, {}^{\st}\{ f _w\in [\Omega] \,\mid\, f(w) \simeq c \text{ for some standard }  c \in A\}.$$
 This just means that ${}^{\bullet}\!\!A$ is a $w$-code for 
$  \sh^{-1} (A) \cap \cT \cap \bbS_w  $.
Standard elements of the set ${}^{\bullet}\!\!A$ are those  $f_w \in [\Omega]$ whose value ``at infinity'' (ie, at $w$) is infinitely close to a standard real  in $A$.

Let $\cL =\, {}^{\st}\{ A \subseteq [0,1] \,\mid\,{}^{\bullet}\!\!A \in  \widehat{\ccB} \,\}$
and let $\ell$ be the standard function on $\cL$ determined by the requirement that 
$\ell(A) = \widehat{m}({}^{\bullet}\!\!A) $ for all standard $A \in \cL$.

\begin{theorem}
The triple $([0,1], \cL, \ell)$ is the Lebesgue measure space on $[0,1]$.
\end{theorem}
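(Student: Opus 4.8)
The plan is to deduce the result from \emph{Transfer}. Both the Lebesgue measure space $([0,1],\fL,\lambda)$ (with $\fL$ the Lebesgue measurable sets and $\lambda$ Lebesgue measure) and $([0,1],\cL,\ell)$ are standard objects, so it suffices to prove that for every \emph{standard} $A\subseteq[0,1]$ one has $A\in\cL$ iff $A$ is Lebesgue measurable, and that then $\ell(A)=\lambda(A)$; the general claim follows by Transfer. Here I would first record how ${}^{\bullet}(-)$ interacts with set operations at the level of codes as in Subsection~\ref{coding}. Since $\sh\colon\cT\to[0,1]$ is a \emph{total} map, $\sh^{-1}([0,1]\setminus A)=\cT\setminus\sh^{-1}(A)$ and $\sh^{-1}(\bigcup_nA_n)=\bigcup_n\sh^{-1}(A_n)$, so for standard families ${}^{\bullet}([0,1]\setminus A)=[\Omega]\setminus{}^{\bullet}A$ and ${}^{\bullet}(\bigcup_nA_n)=\bigcup_n{}^{\bullet}A_n$, while disjoint $A_n$ give disjoint ${}^{\bullet}A_n$. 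Combined with the fact that $(\,[\Omega],\widehat{\ccB},\widehat m\,)$ is a \emph{complete} finite measure space with $\widehat m([\Omega])=1$, this shows (for standard sets, hence by Transfer in general) that $\cL$ is a $\sigma$-algebra, that $\ell$ is a complete $\sigma$-additive measure, and that every standard subset of an $\ell$-null standard set again lies in $\cL$.

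Next I would compute $\ell$ on intervals. For a standard $[a,b]$ the internal set $\{t\in\cT\mid a\le t\le b\}$ has counting measure $\simeq b-a$, and $\sh^{-1}([a,b])\cap\cT$ differs from it only within the (null) monads of $a$ and $b$; hence ${}^{\bullet}[a,b]\in\widehat{\ccB}$ with $\widehat m({}^{\bullet}[a,b])=b-a$, so $[a,b]\in\cL$ and $\ell([a,b])=b-a$. Taking countable unions gives $\ell=\lambda$ on open sets, and complementation gives the same on closed sets. Since $\cL$ is a $\sigma$-algebra on which $\ell$ is a complete measure agreeing with $\lambda$ on the intervals, the uniqueness of the Carath\'eodory extension together with completeness yields the easy inclusion: every Lebesgue measurable $A$ lies in $\cL$ with $\ell(A)=\lambda(A)$.

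The substance of the proof is the reverse inclusion $\cL\subseteq\fL$, which I would obtain from the measure identity
\[
\widehat m_*({}^{\bullet}A)=\lambda_*(A)\qquad\text{for every standard }A\subseteq[0,1],
\]
where $\widehat m_*,\lambda_*$ denote Loeb and Lebesgue inner measure; the corresponding identity for outer measures then follows by complementation, using ${}^{\bullet}([0,1]\setminus A)=[\Omega]\setminus{}^{\bullet}A$ and $\widehat m([\Omega])=1$. Granting this, if $A\in\cL$ then $\widehat m_*({}^{\bullet}A)=\widehat m^*({}^{\bullet}A)$, whence $\lambda_*(A)=\lambda^*(A)$, so $A$ is Lebesgue measurable with $\ell(A)=\widehat m({}^{\bullet}A)=\lambda(A)$. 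The inequality $\widehat m_*({}^{\bullet}A)\ge\lambda_*(A)$ is immediate, since each closed $F\subseteq A$ gives ${}^{\bullet}F\subseteq{}^{\bullet}A$ with $\widehat m({}^{\bullet}F)=\lambda(F)$. The reverse inequality is the crux: by inner regularity of the Loeb measure one approximates ${}^{\bullet}A$ from inside by codes $[X]$ of internal (indeed $w$-standard) sets $X\subseteq\cT$ with $[X]\subseteq{}^{\bullet}A$ and $\widehat m([X])$ close to $\widehat m_*({}^{\bullet}A)$, and then passes to the compact standard part $\sh[X]$, using the push-down fact that $\lambda(\sh[X])\ge\widehat m([X])=\sh(\mu(X))$.

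I expect this push-down to be the main obstacle. The delicate point is relating the standard part $\sh[X]$ of an internal subset $X\subseteq\cT$ back to a Lebesgue measurable subset of the \emph{possibly non-measurable} standard set $A$: from $[X]\subseteq{}^{\bullet}A$ one directly controls $\sh(t)$ only for the $w$-standard $t\in X$, so the naive inclusion $\sh[X]\subseteq A$ must be replaced by a careful argument (via overspill and saturation inside $\bbS_w$, together with the inner regularity of $\widehat m$) that produces a genuine closed $F\subseteq A$ with $\lambda(F)$ arbitrarily close to $\widehat m_*({}^{\bullet}A)$. All of this is carried out through the coding, Representability, and Idealization into $w$-standard sets established in Section~\ref{subuniverses}.
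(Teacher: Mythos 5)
Your proposal follows essentially the same route as the paper's proof: show that ${}^{\bullet}(-)$ commutes with complements and countable unions so that $\cL$ is a $\sigma$-algebra and $\ell$ a complete $\sigma$-additive measure, compute $\ell$ on intervals by comparing ${}^{\bullet}\!(a,b)$ with the codes $[X_{a\pm 1/m,\,b\mp 1/m}]$ up to null monads, and conclude that every Lebesgue measurable set lies in $\cL$ with $\ell=\lambda$. The only divergence is the reverse inclusion $\cL\subseteq\fL$, which the paper does not prove at all but delegates to Albeverio et al., Proposition 3.2.5, with the remark that it ``can be easily adapted''; your sketch via the inner/outer measure identities and the standard-part push-down is precisely that cited argument, and the subtlety you flag --- that $[X]\subseteq{}^{\bullet}\!A$ only controls $\sh(t)$ for $w$-standard $t\in X$ --- is resolved exactly as you suggest: by Countable Idealization into $\bbS_w$, a $w$-standard internal $X$ meets the monad of a standard $c$ iff it does so at a $w$-standard point, so the shadow of $X$ computed over $\bbS_w$ agrees with the usual one.
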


\begin{proof}
We prove that $\cL$ is a $\sigma$-algebra containing all standard open intervals $(a,b) \subseteq [0,1]$ and all singletons $\{c \}$ for standard $c \in [a,b]$. We also prove that $\ell$ is $\sigma$-additive and   $\ell( (a,b)) = b-a$,  $\ell(\{c\}) =0$ for all standard open intervals and singletons, respectively. 
This implies that $\cL$ contains all Lebesgue measurable subsets of $[0,1]$ and that $\ell$ is the Lebesgue measure for such subsets. For a proof (in the model-theoretic  framework) that all sets in $\cL$ are Lebesgue measurable see~\cite{A}, Proposition 3.2.5; it can be easily adapted to our framework.

Let $\la A_k \,\mid\, k \in \bN\ra$ be a standard sequence of elements of $\cL$
and let $A = \bigcup_{k \in \bN} A_k$.
 Then
${}^{\bullet}\!\!A_k\in \widehat{\ccB} $ holds for all standard $k \in \bN$, and
we obtain that ${}^{\bullet}\!\!A = \bigcup_{k \in \bN} {}^{\bullet}\!\!A_k$, because 
if $f(w) \simeq a $ for some standard $a \in A$, then $a \in A_k$ for some standard $k \in \bN$ by Transfer. As $\widehat{\ccB}$ is a $\sigma$-algebra, ${}^{\bullet}\!\!A \in \widehat{\ccB}$ and we conclude that 
$A \in \cL$.
Furthermore, $\ell(A) =\widehat{m}({}^{\bullet}\!\!A)=  \Sigma_{k \in \bN}\; \widehat{m}({}^{\bullet}\!\!A_k)
= \Sigma_{k \in \bN}\;\ell(A_k)$, establishing $\sigma$-additivity of $\ell$.

Given a standard open interval $(a, b) \subseteq [0,1]$, we let
$X_{a,b}  = \cT \cap (a,b)$.
We have $[X_{a,b}] \in \ccB$ and $m([X_{a,b}]) = \sh(\mu( X_{a,b} ) )= b-a$. 

Let $A =  (a,b) $; it remains to observe that, for standard $f_w \in [\Omega]$, $f_w \in {}^{\bullet}\!\!A$ iff 
$f_w \in  [X_{a + 1/m, \;b - 1/m} ]$ for some standard $m \in \bN \setminus \{0\}$.
Standardization gives the function $\la A_m\,\mid\, m \in \bN\setminus \{0\}\ra$
where $A_m = [X_{a + 1/m, \;b - 1/m} ]$ for standard $m$, 
and Transfer implies 
${}^{\bullet}\!\!A = \bigcup_{m \in \bN\setminus \{0\}} A_m$.
It follows that ${}^{\bullet}\!\!A \in \overline{\ccB}$ because the latter is a $\sigma$-algebra, 
and that $\overline{m}(\overline{A}) = b - a$ because $\overline{m}$ is a $\sigma$-additive extension of $m$.
Hence $A \in \cL$ and $\ell(A) = b - a$.

The argument for $A = \{c\}$ is similar, using the fact that ${}^{\bullet}\!\!A = \bigcap_{m \in \bN\setminus \{0\}} A_m$ with $A_m = [X_{c- 1/m,\, c + 1/m} ]$ for standard $m$.
\end{proof}


\subsection{Neutrices and external numbers}
This is another application of nonstandard analysis that extensively uses external sets; see Dinis and van den Berg~\cite{DvdB}. 
A \emph{neutrix} is a convex additive subgroup of $\bR$.
With the exception of $\{0\}$ and $\bR$, neutrices are externals sets; the monad $\bbM(0)$ and the galaxy $\bbG(0)$ are nontrivial examples.
An \emph{external number} is an algebraic sum of a real number and a neutrix.
Addition and multiplication of external numbers are defined by the Minkowski operations.
Let $\mathcal{N}$ denote the collection of all neutrices and
$\mathcal{E}$ denote the collection of all external numbers.
Even in $\HST$, $\mathcal{N}$ and $\mathcal{E}$ are (definable) proper classes of external sets; they are ``too large'' to be external sets  (see Kanovei and Reeken~\cite{KR2}).

This difficulty can be remedied by relativizing these concepts to  $\bbS_w$ (for good $w$). 
In fact, $\mathcal{N}_w =
\pmbaa \mathbb{X} \cap \bbS_w\,\mid\, \mathbb{X}  \in \mathcal{N} \pmbbb$ and 
$\mathcal{E}_w =
\pmbaa a + \mathbb{X} \cap \bbS_w\,\mid\, a \in \bR \cap \bbS_w \,\wedge\,\mathbb{X}  \in \mathcal{N} \pmbbb$.
The collections $\mathcal{N}_w$ and $\mathcal{E}_w $ are external sets (of external sets) in $\HST$.

As the natural embedding of $\bR \cap \bbS_w$ into $\mathcal{E}_w$ given by $r \mapsto r+\{0\}$ is crucial for applications of these concepts, it may be best for most purposes to avoid coding as much as possible. 
For example, the study of algebraic properties of operations $+$ and $\times$  on  $\mathcal{E}_w$ can be carried out in $\BST$ while viewing external numbers as  
external subsets of  $\bR \cap \bbS_w$ .
However, work with external numbers often focuses on the structure $(\mathcal{E}_w , +, \times)$, its subsets, functions with values in it, and so on. 
Then one can use the  techniques of Subsection~\ref{coding}; see in particular Definition~\ref{coding2} with $V = \bR$, to code $\mathcal{N}_w$ and $\mathcal{E}_w$ by standard structures.

First, the external set $\bR \cap \bbS_w$ is coded by the standard set $\bR^I/w$. As shown in Remark~\ref{ultrapower},  $\bR^I/w= \bR^I/U_w = {}^{\ast}\bR$, the hyperreals constructed as the standard ultrapower of $\bR$ by the standard ultrafilter $U_w$ generated by $w$. 
Let ${}^{\ast}\!\!< $ , ${}^{\ast}\!+ $ and ${}^{\ast}\!\times$ be the ordering, addition and multiplication on the hyperreals.
The coding provides an external isomorphism between   $({}^{\ast}\bR \cap \bbS, {}^{\ast}\!\!<, {}^{\ast}\!+, {}^{\ast}\!\times) $ and $(\bR \cap \bbS_w ,<, +, \times) $, so we can informally identify ${}^{\ast}\bR \cap \bbS$ and $\bR \cap \bbS_w$.
 Neutrices and external numbers in the hyperreals 
$({}^{\ast}\bR, {}^{\ast}\!\!<, {}^{\ast}\!+, {}^{\ast}\!\times) $
can be defined the same way as in  $\bR$. External numbers 
in $\bR \cap \bbS_w$ are coded by the standard  external numbers  in the hyperreals ${}^{\!\ast} \bR$. The coding preserves algebraic operations on external numbers.
The collections $\mathcal{N}_w$ and $\mathcal{E}_w$ are coded   respectively by
the standard sets $N$ and $E$ of all neutrices and external numbers in  ${}^{\!\ast} \bR$.

This approach is admittedly rather awkward.
In $\HST$ the universes $\bbS_w$ can  be extended to ``external universes'' $\mathbf{WF}(\bbS_w)$
(see Kanovei and Reeken~\cite{KR}, Sections 6.3, 6.4 for details).
Perhaps the most practical way to handle external numbers  would be to work with $\mathcal{N}_w$ and $\mathcal{E}_w$  in these universes 
and in the end code the final results in $\BST$, if desired.


\section{Subuniverses and ultrapowers}  \label{appB}

\subsection{Subuniverses $\bbS_w$ and ultrapowers}\label{universes}

The universe $\bbS_w$ is closely connected to the ultrapower of the standard universe by a standard ultrafilter.  

The principle of Standardization implies that there is a standard set $U_w$ such that 
\begin{equation}
\forall^{\st} X \, ( X \in U_w \eqi  X \in \cP(I) \,\wedge\,  w \in X).
\end{equation}
Clearly 
(i) $\emptyset \notin U_w$,
 and for standard $X, Y \in \cP(I)$

(ii) $X \in U_w \,\wedge\, X \subseteq Y \imp Y \in U_w$;

(iii) $ X, Y \in U_w \imp X \cap Y \in U_w$;

(iv) $ X \in U_w \,\vee\, (I\setminus X) \in U_w$.

By Transfer, (ii) -- (iv) hold for all $X, Y \in \cP(I)$, so $U_w$ is an ultrafilter over $I$. 
One sees easily that $U_w$ is nonprincipal if and only if $w$ is nonstandard. 
Conversely,  Bounded Idealization of $\BST$ implies that for every standard ultrafilter $U$ over $I$ there are (many) $w \in I$  such that $U = U_w$.

The ultrapower of the universe of all sets $\bbV$  by a standard ultrafilter $U$ is defined  in the usual way. 
One defines an equivalence relation $=_U$ on $ \bbV^I$ by 
\begin{equation} f =_U g \eqi \{ i \in I \,\mid\, f(i) = g(i) \} \in U,
\end{equation}
and a membership relation
\begin{equation}
f  \in_U g \eqi \{ i \in I \,\mid\, f(i) \in g(i) \} \in U.
\end{equation}
The usual procedure at this point is to form equivalence classes $f_U$ of functions  $f \in \bbV^I$ modulo $=_U$, using ``Scott's trick'' of taking only the functions of the minimal von Neumann rank to guarantee that the equivalence classes  are sets: Let 
$$ f_U = \{g \in \bbV^I\,\mid\, g =_U f \text{ and } \forall h\in \bbV^I\,
(h =_U f \imp \rank h \ge \rank g)\};$$
see Jech~\cite{J}, (9.3) and (28.15). One lets 
$\bbV^I\!/U $ be the class of all $ f_U$ for $ f \in \bbV^I$
 and defines
\begin{equation}
f_U \in_U g_U \eqi f \in_U g.
\end{equation}
The ultrapower of $\bbV$ by $U$ is the structure $(\bbV^I\!/U,\, \in_U)$.
The universe $\bbV$ is embedded into $\bbV^I\!/U$ via $x \mapsto (c_x)_{U}$
where $c_x$ is the constant function on $I$ with value $x$.

We note that  $f_U$ is standard iff $f_U = g_U $ for some standard $g  \in \bbV^I$.
\emph{We assume from now on that whenever the equivalence class $f_U$ is standard, the representative function $f$ is taken to be standard.}

The key insight is that the \emph{standard} elements of the ultrapower of $\bbV$ by $U_w$
are in equality-and-membership-preserving correspondence with $w$-\emph{standard}
elements of $\bbV$. It is expressed by the following proposition, which is an immediate consequence of definitions (3) - (5).

\begin{proposition}\label{isomorphism}
For any standard functions $f,$ $ g \in \bbV^I$:
$$ f  =_{U_w} g   \eqi f_{U_w} = g_{U_w}  \eqi f(w) = g(w) \quad \text{and}$$
$$ f  \in_{U_w} g    \eqi f_{U_w} \in_{U_w} g_{U_w}   \eqi  f(w) \in g(w). $$
\end{proposition}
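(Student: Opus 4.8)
The plan is to prove Proposition~\ref{isomorphism} by reducing each of the two biconditional chains to a single nontrivial equivalence, since the middle links are immediate from the definitions already set up. For the equality chain, the link $f =_{U_w} g \eqi f_{U_w} = g_{U_w}$ is just definition (5) together with the standing convention that standard equivalence classes are represented by standard functions, so the entire content lies in showing
\[
f =_{U_w} g \eqi f(w) = g(w)
\]
for standard $f, g \in \bbV^I$. By definition (3), $f =_{U_w} g$ means $\{ i \in I \mid f(i) = g(i)\} \in U_w$. Set $X = \{ i \in I \mid f(i) = g(i)\}$; since $f, g$ are standard, $X$ is a standard subset of $I$ (Transfer applied to the $\in$-formula defining $X$ from the standard parameters $f, g$). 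The defining property (2) of $U_w$ says that for standard $X$, membership $X \in U_w$ is equivalent to $w \in X$, which is exactly the statement $f(w) = g(w)$. This closes the equality chain.

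The membership chain is handled identically. The link $f \in_{U_w} g \eqi f_{U_w} \in_{U_w} g_{U_w}$ is definition (6) together with the standard-representative convention, so it remains to prove
\[
f \in_{U_w} g \eqi f(w) \in g(w)
\]
for standard $f, g$. Here $f \in_{U_w} g$ unwinds by (4) to $\{ i \in I \mid f(i) \in g(i)\} \in U_w$; writing $Y = \{ i \in I \mid f(i) \in g(i)\}$, again $Y$ is standard by Transfer, and the defining property (2) of $U_w$ gives $Y \in U_w \eqi w \in Y$, that is, $f(w) \in g(w)$. So both chains reduce to the same mechanism.

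The single conceptual step doing all the work is the passage from ``a standard set $X$ belongs to $U_w$'' to ``$w \in X$,'' which is precisely the characterization (2) of $U_w$ restricted to standard $X$. The only point requiring care — and the step I would flag as the main obstacle, though it is mild — is justifying that the coincidence sets $X$ and $Y$ are themselves standard, so that (2) applies to them; this is a routine invocation of Transfer, using that $f$, $g$, and $I$ are all standard parameters and that the defining conditions $f(i) = g(i)$ and $f(i) \in g(i)$ are $\in$-formulas. Everything else is bookkeeping around definitions (2)--(6), and no idealization or standardization beyond what has already been established is needed; hence the proposition is, as the statement announces, an immediate consequence of the definitions.
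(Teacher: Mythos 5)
Your proposal is correct and matches the paper, which simply declares the proposition an immediate consequence of the definitions; your write-up is exactly the intended unwinding, with the key step being that the coincidence sets $X$ and $Y$ are standard (being defined by $\in$-formulas with standard parameters $f$, $g$, $I$) so that the Standardization-derived characterization $X \in U_w \eqi w \in X$ for standard $X$ applies. No further comment is needed.
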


The  correspondence $\boldsymbol{\Phi}_w$ is defined on $\bbS_w \times (\bbV ^{I}\!/U_w \,\cap\, \bbS)$ by
$$\boldsymbol{\Phi}_w( \xi, f_{U_w}) \eqi  f(w)  = \xi.$$
In this notation, Proposition~\ref{isomorphism} asserts the following.

\begin{corollary} \label{isom1}
The class $\boldsymbol{\Phi}_w$ is an \emph{isomorphism}
between the structures $(\bbS_w, \,  \in)$ and  $(\bbV^I\!/U_w \,\cap\, \bbS , \, \in_{U_w} )$.
\end{corollary}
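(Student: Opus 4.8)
The plan is to verify the four defining properties of an isomorphism of relational structures — that $\boldsymbol{\Phi}_w$ is a total, single-valued, injective and surjective correspondence between $\bbS_w$ and $\bbV^I\!/U_w \cap \bbS$, and that it transports $\in$ to $\in_{U_w}$ in both directions — reading each of these off the two biconditionals recorded in Proposition~\ref{isomorphism}. Before beginning I would invoke the standing convention stated just above that proposition, namely that whenever an equivalence class $f_{U_w}$ is standard its representative $f$ may be taken standard. This is exactly what licenses the use of Proposition~\ref{isomorphism}, whose hypotheses demand that $f$ and $g$ be standard, and keeping track of it is the one piece of bookkeeping the argument requires.

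First I would check that $\boldsymbol{\Phi}_w$ is a bijection. Totality is immediate from Definition~\ref{def1}: any $\xi \in \bbS_w$ is of the form $\xi = f(w)$ for some standard $f \in \bbV^I$, and then $f_{U_w}$ is a standard element of $\bbV^I\!/U_w$ with $\boldsymbol{\Phi}_w(\xi, f_{U_w})$. Single-valuedness and injectivity both come from the equality clause $f(w)=g(w) \eqi f_{U_w}=g_{U_w}$ of Proposition~\ref{isomorphism}: if $\boldsymbol{\Phi}_w(\xi, f_{U_w})$ and $\boldsymbol{\Phi}_w(\xi, g_{U_w})$ with $f,g$ standard, then $f(w)=\xi=g(w)$, whence $f_{U_w}=g_{U_w}$, so the image of $\xi$ is unique; conversely $f_{U_w}=g_{U_w}$ forces $f(w)=g(w)$, so distinct $w$-standard elements receive distinct images. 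Surjectivity onto $\bbV^I\!/U_w \cap \bbS$ is the same computation run backwards: a standard $f_{U_w}$ admits a standard representative $f$, and then $\xi := f(w)$ is $w$-standard with $\boldsymbol{\Phi}_w(\xi,f_{U_w})$.

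Finally I would read membership preservation directly off the second clause $f(w)\in g(w) \eqi f_{U_w}\in_{U_w} g_{U_w}$: for $\xi=f(w)$ and $\eta=g(w)$ with $f,g$ standard, one has $\xi\in\eta$ precisely when $f_{U_w}\in_{U_w}g_{U_w}$, which is exactly the assertion that $\boldsymbol{\Phi}_w$ preserves and reflects $\in$. I do not expect any genuine obstacle here: the entire mathematical content sits in Proposition~\ref{isomorphism}, and the corollary is a purely formal consequence of its two displayed equivalences. The only point meriting care is the one flagged above — ensuring that every standard class appearing in the argument is named by a standard function, so that Proposition~\ref{isomorphism} is actually applicable — and in particular there is no need to re-engage Scott's trick or any idealization principle at this stage.
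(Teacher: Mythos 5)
Your proof is correct and takes essentially the same route as the paper, which simply observes that the corollary is Proposition~\ref{isomorphism} restated in the notation of $\boldsymbol{\Phi}_w$ (together with Definition~\ref{def1} for totality and the standing convention on standard representatives for surjectivity). You have merely spelled out the bijectivity and membership-preservation checks that the paper leaves implicit.
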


We note that $(\bbV^I\!/U_w \,\cap\, \bbS , \, \in_{U_w} )$ is the ultrapower of the universe in the sense of the standard universe $\bbS$.  If $\phi(v) $ is an $\in$-formula such that 
$\bbV^I\!/U_w = \pmbaa F \,\mid\, \phi(F) \pmbbb$, then 
$\bbV^I\!/U_w \,\cap\,\bbS= \pmbaa F \in \bbS\,\mid\, \phi^{\st}(F) \pmbbb$.
If $\psi(u,v)$ is an $\in$-formula such that $F \in_{U_w} G \eqi \psi(F,G)$, then $F \in_{U_w} G \eqi \psi^{\st} (F,G)$ holds for $F,G \in \bbS$. 

If $\boldsymbol{\Phi}_w (\xi, f_{U_w})$ holds, we write $\boldsymbol{\Phi}_w(\xi) = f_{U_w}$.
We note that for $x \in \bbS$, $\boldsymbol{\Phi}_w(x) = (c_x)_{U_w}$ where $c_x$ is the constant function on $I$ with value $x$. 
As is customary, we identify $(c_x)_{U_w}$ with $x$.
This gives a stronger version of Corollary~\ref{isom1}.

\begin{corollary}\label{isom2}
The class $\boldsymbol{\Phi}_w$ is an isomorphism
between the structures $(\bbS_w, \,\bbS, \, \in, )$ and  $(\bbV^I\!/U_w \,\cap\, \bbS , \, \bbS, \, \in_{U_w}  )$.
\end{corollary}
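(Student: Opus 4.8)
The plan is to treat Corollary~\ref{isom2} as a routine upgrade of Corollary~\ref{isom1}. Corollary~\ref{isom1} already furnishes $\boldsymbol{\Phi}_w$ as a bijection between $\bbS_w$ and $\bbV^I\!/U_w \cap \bbS$ that preserves $=$ and $\in$ (interpreted as $\in_{U_w}$ on the target). The only new content in the stronger statement is that $\boldsymbol{\Phi}_w$ also respects the distinguished unary predicate: it should carry the subclass $\bbS$ of standard sets in the source onto the subclass $\bbS$ of the target, the latter being the diagonal copy $\{(c_x)_{U_w} \mid \st(x)\}$ under the customary identification of $(c_x)_{U_w}$ with $x$. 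So I would simply verify this correspondence in both directions, noting first that by Proposition~\ref{propos}(a) the source subclass $\bbS$ is contained in $\bbS_w$, so the source structure $(\bbS_w, \bbS, \in)$ is well-formed.

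For the forward direction, let $\xi$ be standard. As recorded in the remark preceding the corollary, a standard representative of $\xi$ is the constant function $c_\xi$, so $\boldsymbol{\Phi}_w(\xi) = (c_\xi)_{U_w}$, which is by definition an element of the diagonal subclass $\bbS$ of the target. Thus $\boldsymbol{\Phi}_w$ sends standard $\xi$ to distinguished elements.

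For the reverse direction, suppose $\xi \in \bbS_w$ and $\boldsymbol{\Phi}_w(\xi)$ lies in the target subclass $\bbS$, say $\boldsymbol{\Phi}_w(\xi) = (c_x)_{U_w}$ for some standard $x$. Writing $\boldsymbol{\Phi}_w(\xi) = f_{U_w}$ with $f$ standard and $f(w) = \xi$, I would apply Proposition~\ref{isomorphism} to the standard functions $f$ and $c_x$: the equivalence $f_{U_w} = (c_x)_{U_w} \eqi f(w) = c_x(w)$ gives $\xi = x$, hence $\xi$ is standard. Combined with Corollary~\ref{isom1}, this establishes that $\boldsymbol{\Phi}_w$ is an isomorphism of the two structures with their standardness predicates.

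The step requiring the most care is conceptual rather than computational: one must keep the two senses of ``standard'' on the ultrapower side distinct. The restriction $\cap\, \bbS$ cutting out the domain $\bbV^I\!/U_w \cap \bbS$ refers to an equivalence class $f_{U_w}$ being standard \emph{as a set of} $\BST$, whereas the distinguished predicate $\bbS$ in the target refers to membership in the diagonal image $\{(c_x)_{U_w} \mid \st(x)\}$. These genuinely differ --- a $w$-standard set such as $w$ itself, represented by the identity on $I$, is standard in the first sense but not the second --- and the content of the corollary is precisely that $\boldsymbol{\Phi}_w$ matches the second notion with genuine standardness in $\bbS_w$.
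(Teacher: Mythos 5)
Your proposal is correct and follows the same route as the paper, which justifies the corollary solely by the remark that $\boldsymbol{\Phi}_w(x) = (c_x)_{U_w}$ for standard $x$ together with the identification of $(c_x)_{U_w}$ with $x$; you merely spell out both directions of the preservation of the standardness predicate (the reverse direction via Proposition~\ref{isomorphism}, exactly as the paper's setup intends). Your closing observation about the two senses of ``standard'' on the ultrapower side is an accurate and worthwhile clarification of what the distinguished predicate in the target structure must mean, but it does not change the argument.
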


We also note that  $\boldsymbol{\Phi}_w(w) = Id_{U_w}$ where $Id(i) = i $ for all $i \in I$. 

Recall that quantifiers with superscript $\st_w$ range over $w$-standard sets. If  $\phi$ is an $\in$-formula,  $\phi^{\st_w}$ is the formula obtained from $\phi$ by relativizing all quantifiers to $\st_w$. 
 \L o\'{s}'s Theorem for $\in$-formulas  takes the following form:
\begin{lemma} \label{los}
For standard $f_1,\ldots, f_r$,
$$\phi^{\st_w} (f_1(w), \ldots,  f_r (w)) \eqi
\{ i \in I \,\mid\,  \phi( f_1(i),\ldots, f_r(i)) \} \in U_w.$$
\end{lemma}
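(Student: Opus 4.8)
The plan is to argue by induction on the logical complexity of the $\in$-formula $\phi$, treating $f_1,\ldots,f_r$ as fixed standard functions throughout. The atomic case is supplied directly by Proposition~\ref{isomorphism}, the propositional connectives are handled by the ultrafilter properties (i)--(iv) of $U_w$ recorded above, and the quantifier step is where the substance lies. For an atomic $\phi$ there are no quantifiers, so $\phi^{\st_w}$ coincides with $\phi$; the two shapes $v_a = v_b$ and $v_a \in v_b$, after substituting $f_a(w), f_b(w)$, are exactly the two equivalences of Proposition~\ref{isomorphism}, which identify $f_a(w) = f_b(w)$ and $f_a(w)\in f_b(w)$ with membership of the corresponding coincidence sets in $U_w$.

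For the connectives it suffices to treat negation and conjunction. If $\phi = \neg\psi$, then by the induction hypothesis $\psi^{\st_w}(f_1(w),\ldots)$ fails iff $\{i \in I \mid \psi(f_1(i),\ldots,f_r(i))\} \notin U_w$; since a set and its complement cannot both belong to $U_w$ (by (i) and (iii)) while one of them always does (by (iv)), this holds iff the complement $\{i \in I \mid \neg\psi(f_1(i),\ldots,f_r(i))\}$ lies in $U_w$. If $\phi = \psi_1 \wedge \psi_2$, then invoking the induction hypothesis on each conjunct together with upward closure (ii) and closure under intersection (iii) shows that $\psi_1^{\st_w}\wedge\psi_2^{\st_w}$ holds iff both \L o\'s sets belong to $U_w$ iff their intersection $\{i \mid \psi_1 \wedge \psi_2\}$ does. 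The remaining connectives reduce to these two.

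The quantifier step is the crux, and I would phrase it for $\phi = \exists v\, \psi(v, v_1, \ldots, v_r)$, so that $\phi^{\st_w} = \exists^{\st_w} v\, \psi^{\st_w}$. The forward direction is routine: a $w$-standard witness is of the form $g(w)$ for a standard $g \in \bbV^I$, and the induction hypothesis converts $\psi^{\st_w}(g(w), f_1(w),\ldots)$ into $\{i \mid \psi(g(i), f_1(i),\ldots)\} \in U_w$, whence $\{i \mid \exists v\,\psi(v, f_1(i),\ldots)\} \in U_w$ by upward closure (ii). The hard direction is the converse, where I must manufacture a $w$-standard witness from the mere assumption that $\{i \mid \exists v\,\psi(v, f_1(i),\ldots)\} \in U_w$. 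Here I would apply the Axiom of Choice available in $\ZFC \subseteq \BST$: the statement that there is a function $g$ with domain $I$ satisfying $\psi(g(i), f_1(i),\ldots,f_r(i))$ whenever $\exists v\,\psi(v, f_1(i),\ldots,f_r(i))$ holds is a theorem of $\ZFC$ whose only parameters are the standard objects $I, f_1, \ldots, f_r$. By Transfer there is such a $g$ that is itself standard, so $\xi = g(w)$ is $w$-standard; on the set $\{i \mid \exists v\,\psi(v,f_1(i),\ldots)\} \in U_w$ this $g$ selects a genuine witness, giving $\{i \mid \psi(g(i),f_1(i),\ldots)\} \in U_w$, and the induction hypothesis then yields $\psi^{\st_w}(g(w), f_1(w),\ldots)$, the desired $w$-standard witness. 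The universal quantifier follows by duality from this case and the negation clause.

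The main obstacle is precisely this backward quantifier step: a pointwise choice of witnesses produces some function on $I$, but there is no reason it should be standard, and standardness is exactly what is needed to conclude $\st_w(\xi)$. The resolution is to package the existence of a witness-selecting function as a single $\in$-statement with standard parameters and let Transfer hand back a standard $g$; everything else collapses to the base equivalences of Proposition~\ref{isomorphism} and the elementary filter calculus of properties (i)--(iv).
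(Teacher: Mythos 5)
Your proof is correct, but it follows a genuinely different route from the paper's. The paper proves the lemma in three strokes: Corollary~\ref{isom1} (the isomorphism $\boldsymbol{\Phi}_w$) converts $\phi^{\st_w}(f_1(w),\ldots,f_r(w))$ into the assertion that $\phi((f_1)_{U_w},\ldots,(f_r)_{U_w})$ holds in the standard part $(\bbV^I\!/U_w \cap \bbS, \in_{U_w})$ of the ultrapower; Transfer lifts this to the full class ultrapower $(\bbV^I\!/U_w, \in_{U_w})$; and the usual \L o\'{s} theorem of $\ZFC$ finishes. You instead run the \L o\'{s} induction directly inside $\BST$, which amounts to re-proving all three ingredients at once: your atomic case is Proposition~\ref{isomorphism}, your connective cases redo the ultrafilter calculus that underlies the classical theorem, and your backward existential step --- package the witness-selecting function as an $\in$-statement provable in $\ZFC$ from standard parameters, extract a standard $g$ by Transfer, and note that $g(w)$ is then $w$-standard --- is precisely the $\AC$-plus-Transfer device the paper uses to prove Proposition~\ref{univ}(1). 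The paper's route buys brevity and reuse of known facts (at the cost of having to make sense of ``holds in'' for class-sized structures); yours buys self-containedness and an explicit display of where Choice and Transfer enter, and it implicitly re-derives that $\boldsymbol{\Phi}_w$ preserves all formulas rather than delegating this to the word ``isomorphism.'' Two points worth making explicit if you write this up: the \L o\'{s} sets $\{ i \in I \mid \psi(f_1(i),\ldots)\}$ are standard (being $\in$-definable from standard parameters), so the ultrafilter properties (i)--(iv), stated for standard sets and extended by Transfer, do apply to them; and the pointwise witnesses range over a proper class, so the $\ZFC$ statement you Transfer needs Scott's trick or Collection to cut them down to a set before $\AC$ is invoked --- the same silent step the paper takes in its own appeal to $\AC$.
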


The structures $(\bbV^I\!/U_w , \, \in_{U_w} )$  and $(\bbV^I\!/U_w \,\cap\, \bbS , \, \in_{U_w} )$ are not models in the sense of model theory because their components are proper classes; hence the satisfaction relation $\vDash$ is not available. Given an $\in$-formula $\phi$ with parameters from $\bbV^I\!/U_w $,  we write ``$\phi$ \emph{holds in} $(\bbV^I\!/U_w , \, \in_{U_w })$'' to stand for the \emph{formula} obtained from  $\phi$ by replacing all occurrences of $u \in v$ with $ u\in_{U_w}\! v$  and relativizing all quantifiers to $\bbV^I\!/U_w $; similarly for 
``$\phi$ \emph{holds in} $(\bbV^I\!/U_w\,\cap\, \bbS  , \, \in_{U_w} )$.''

\bigskip
\emph{Proof of Lemma}~\ref{los}.
We have 
$$ \phi^{\st_w} (f_1(w), \ldots,  f_r (w)) \eqi $$
$$ \phi((f_1)_{U_w},\ldots,(f_r)_{U_w}) \text{ holds in } (\bbV^I\!/U_w \cap \bbS , \, \in_{U_w} )   \text{\quad [by Corollary~\ref{isom1}] }$$
$$\eqi \phi((f_1)_{U_w},\ldots,(f_r)_{U_w}) \text{ holds in } (\bbV^I\!/U_w, \, \in_{U_w} )  \text{\quad [by Transfer]}$$
$$\eqi \{ i \in I \,\mid\,  \phi( f_1(i),\ldots, f_r(i)) \} \in U_w \text{\quad [by  the usual \L o\'{s}'s Theorem]. }$$
\qed

\begin{remark} \label{ultrapower}
In Subsection~\ref{coding} we fix a universal standard set $V$ and for standard $f \in V^I$ define $f_{w,V}$
(see Definition~\ref{coding2}). Clearly 
\[
\begin{aligned}
f_{w,V} &= {}^{\st}\{ g\in V^I \,\mid\, g(w) = f(w)\} = 
 {}^{\st}\{ g\in V^I \,\mid\, g  =_{U_w} f\} 
\\&=\{ g\in V^I \,\mid\, g  =_{U_w} f\},
\end{aligned}
\]
where the last step is by Transfer.
Thus, again by Transfer, $V^I/w$ is nothing but the ultrapower $V^I/U_w$. 
\end{remark}

\subsection{Proofs of claims in Subsection~\ref{wstandard}} \label{universproofs}

\begin{proposition}\label{cinc}
A set $w$ is good iff $U_w$ is countably incomplete. 
\end{proposition}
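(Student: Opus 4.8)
The plan is to prove both implications by translating between a nonstandard $w$-standard natural number and a decreasing sequence in $U_w$ with empty intersection. First I would recall that $U_w$ is \emph{countably incomplete} precisely when there is a decreasing sequence $\la X_n \,\mid\, n \in \bN\ra$ with $X_n \in U_w$ for all $n$ and $\bigcap_{n \in \bN} X_n = \emptyset$. Since $U_w$ is standard, this property is expressed by an $\in$-formula with the single standard parameter $U_w$, so by existential Transfer a witnessing sequence may always be taken \emph{standard}; I will use this in the converse. Throughout I would lean on the defining property of $U_w$: for \emph{standard} $X \sus I$ one has $X \in U_w$ iff $w \in X$.

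For \emph{good $\imp$ countably incomplete}, I would start from a $w$-standard nonstandard $\nu \in \bN$ and use Representability (Proposition~\ref{univ}(3)) to fix a standard $f \in \bN^I$ with $f(w) = \nu$. Setting $X_n = \{ i \in I \,\mid\, f(i) > n\}$ gives a standard decreasing sequence (the map $n \mapsto X_n$ is $\in$-definable from the standard parameters $f, I$). For standard $n$ the set $X_n$ is standard and $w \in X_n$, since $\nu = f(w) > n$ by unlimitedness of $\nu$; hence $X_n \in U_w$, and I would promote this to all $n \in \bN$ by Transfer. Finally $\bigcap_{n} X_n = \emptyset$ is internal, as each $i$ satisfies $i \notin X_{f(i)}$; so $\la X_n\ra$ witnesses countable incompleteness.

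For the converse, I would take a standard decreasing sequence $\la X_n \,\mid\, n \in \bN\ra$ with $X_n \in U_w$ and $\bigcap_n X_n = \emptyset$, and define the standard function $f \in \bN^I$ by letting $f(i)$ be the least $n$ with $i \notin X_n$ (well-defined internally for each $i$ since the intersection is empty). Then $\nu = f(w)$ is a $w$-standard element of $\bN$ by Definition~\ref{def1}. For every standard $n$ the standard set $X_n$ lies in $U_w$, so $w \in X_n$ and therefore $f(w) > n$; thus $\nu$ exceeds every standard natural, is nonstandard, and $w$ is good.

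I expect the main obstacle to be the bookkeeping of the standard/$w$-standard/internal distinction rather than any computation. The defining biconditional for $U_w$ speaks only about \emph{standard} subsets of $I$, so in the forward direction membership in $U_w$ must first be checked at standard indices and only then promoted to all indices by Transfer; dually, in the converse one must secure a \emph{standard} witnessing sequence before the explicitly defined $f$ can be recognized as standard and $f(w)$ as $w$-standard.
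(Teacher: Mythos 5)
Your proof is correct. It does, however, take a different route from the paper, which disposes of Proposition~\ref{cinc} in one line: there the equivalence is declared ``immediate'' from the isomorphism $\boldsymbol{\Phi}_w$ between $(\bbS_w,\in)$ and the standard part of the ultrapower $(\bbV^I\!/U_w,\in_{U_w})$ (Corollary~\ref{isom1}), combined with the classical model-theoretic fact, cited from Chang--Keisler, that an ultrafilter is countably incomplete iff the ultrapower of $\bN$ by it contains an element above all the constants. You instead carry out the explicit translation inside $\BST$: Representability (Proposition~\ref{univ}(3)) turns a $w$-standard unlimited $\nu\in\bN$ into a standard $f\in\bN^I$ with $f(w)=\nu$, the standard chain $X_n=\{i\in I\mid f(i)>n\}$ then witnesses countable incompleteness; conversely, existential Transfer supplies a \emph{standard} witnessing chain, from which the standard function $f(i)=\min\{n\mid i\notin X_n\}$ produces the unlimited $w$-standard natural $f(w)$. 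This is of course exactly the combinatorial content hiding behind the Chang--Keisler citation, but your version is self-contained: it needs neither the isomorphism $\boldsymbol{\Phi}_w$ nor \L o\'{s}'s Theorem, only the defining property of $U_w$ (for standard $X\subseteq I$, $X\in U_w\eqi w\in X$), Representability, and Transfer. Your bookkeeping of where standardness must be secured --- checking $X_n\in U_w$ at standard $n$ first and then promoting by Transfer, and taking the witnessing sequence standard before reading off $f$ --- is precisely the point at which a careless version of this argument would break down, and you handle it correctly.
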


Of course, ultrapowers by countably incomplete ultrafilters are the ones  of interest in nonstandard analysis.

\begin{proof}
Immediate from  the isomorphism  $\boldsymbol{\Phi}$ between the universe of $w$-internal sets $(\bbS_w, \in)$ and the ultrapower  $((\bbV^I/U_w) \cap \bbS, \,\in_{U_w})$
(see Chang-Keisler~\cite{CK}, Sec. 4.3).
\end{proof}

\emph{Proof of Proposition}~\ref{univ}

\emph{Proof of} (1):
Assume $\exists x\; \phi(x, p_0)$ where  $p_0$ is (wlog. the only) parameter and $\st_w (p_0)$.
Fix a standard set $P$ such that $p_0 \in P$ (Boundedness).
Use $\AC$ to obtain a standard function $F$  on $P$ such that 
$\forall p \in P\, ( \exists x\, \phi(x,p) \imp \phi( F(p), p) ).$
Hence $\phi(F(p_0), p_0)$ holds. As $F(p_0) $ is $w$-standard by Proposition~\ref{propos} (c), we conclude 
that $\exists^{\st_w} x\; \phi(x, p_0)$.\qed

\bigskip

\emph{Proof of} (2):
It is well-known that every ultrapower by a countably incomplete ultrafilter $U$ is $\omega_1$-saturated
(see Chang-Keisler~\cite{CK}, Theorem 6.1.1). Hence Countable Idealization in the form
\begin{equation} \forall^{\st}  n \in \bN\;\exists x\; \forall m \in \bN\; (m \le n \;\imp  \phi(m,x))\eqi 
\exists x \; \forall^{\st} n \in \bN \; \phi(n,x)\end{equation}
holds in $(\bbV^I\!/U_w , \, \bbV, \, \in_{U_w} )$.
By $\BST$ Transfer, (6) holds in $(\bbV^I\!/U_w \cap \bbS, \, \bbS, \, \in_{U_w} )$, and,
by Corollary~\ref{isom2}, it  holds in $(\bbS_w, \bbS, \in)$. This translates to 
$\forall^{\st}  n \in \bN\;\exists^{\st_w} x\; \forall^{\st_w} m \in \bN\; (m \le n \;\imp  \phi^{\st_w}(m,x))$
$\eqi  
\exists^{\st_w} x \; \forall^{\st} n \in \bN \; \phi^{\st_w}(n,x).$
Using  $w$-Transfer we  get the desired form
$$\forall^{\st}  n \in \bN\;\exists x\; \forall m \in \bN\; (m \le n \;\imp  \phi(m,x))\eqi 
\exists^{\st_w} x \; \forall^{\st} n \in \bN \; \phi(n,x).\qed$$

\bigskip
\emph{Proof of} (3):
Recall that
$\st_w(x)$ means that $x = g(w)$ for some standard $g$ defined on $I$. 
Let $\phi(v)$ be an $\in$-formula with standard parameters. 
Assume $\phi(g(w))$; by $w$-Transfer then  $\phi^{\st_w}(g(w))$.
From the isomorphism between $(\bbS_w, \in)$ and  $(\bbV^I/U_w) \cap \bbS, \,\in_{U_w})$  
and \L o\'{s}'s Theorem it follows  that 
$ X = \{ i \in I \,\mid\, \phi(g(i))\} \in U_w$. Pick $i_0 \in X$  and
let $f$ be the standard function defined by 
$$f(i) = g(i) \text{ for  } i \in X; \;f(i) = f(i_0) \text{ otherwise.}$$
Then $f(w) = x$ and $\phi(f(i))$ holds for all $i \in I$.\qed
\qed

\bigskip

\begin{definition}\label{kappasat}
Let $\kappa$ be a standard  infinite cardinal.
The set $w$ is $\kappa^+$-\emph{good} if $U_w$ is a countably incomplete $\kappa^+$-good ultrafilter (In particular, $w$ is \emph{good} iff it is $\omega_1$-good; see Proposition~\ref{cinc}.)
\end{definition}

We prove Proposition~\ref{satuniv} in the following form.

\begin{proposition}\label{satunivcopy}
\emph{(Idealization into $w$-standard sets over sets of cardinality $\le \kappa$)}
Let $\phi$ be an $\in$-formula with $w$-standard parameters. If $w$ is $\kappa^+$-good, then
for every standard set $A$ of cardinality 
$\le \kappa$
$$\forall^{\st\! \fin} a \subseteq A \;\exists y\; \forall x \in a\;
\phi(x,y) \eqi \exists^{\st_w} y \; \forall^{\st} x \in A \; \phi(x,y).$$
 For every standard uncountable cardinal $\kappa$ and every $z$ there exist $\kappa^+$-good
 $w$ so that $z$ is $w$-standard.
\end{proposition}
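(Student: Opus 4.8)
The plan has two parts. For the Idealization equivalence I would repeat the proof of Proposition~\ref{univ}(2) verbatim, with $\omega_1$-saturation replaced by $\kappa^+$-saturation. Since $w$ is $\kappa^+$-good, $U_w$ is a countably incomplete $\kappa^+$-good ultrafilter (Definition~\ref{kappasat}), so by the classical theorem that ultrapowers by such ultrafilters are $\kappa^+$-saturated (Chang and Keisler~\cite{CK}, Theorem 6.1.8) the structure $(\bbV^I\!/U_w,\,\bbV,\,\in_{U_w})$ is $\kappa^+$-saturated. Hence for every $A \in \bbV$ of cardinality $\le \kappa$ the Idealization schema $\forall^{\st\!\fin} a \subseteq A\,\exists y\,\forall x \in a\,\phi(x,y) \eqi \exists y\,\forall^{\st} x \in A\,\phi(x,y)$ holds in this structure, the right-hand side being the realization of the finitely satisfiable type $\{\phi(x,\cdot) : x \in A\}$, which has at most $\kappa$ members. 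Then, exactly as in Proposition~\ref{univ}(2): $\BST$ Transfer carries the schema to $(\bbV^I\!/U_w \cap \bbS,\,\bbS,\,\in_{U_w})$, Corollary~\ref{isom2} transfers it along $\boldsymbol{\Phi}_w$ to $(\bbS_w,\,\bbS,\,\in)$ in its $\phi^{\st_w}$-relativized form, and Transfer from $w$-standard sets (Proposition~\ref{univ}(1)) removes the relativization. I expect this half to be routine; the only care needed is the bookkeeping of relativized quantifiers and the observation that ``$|A| \le \kappa$'' is preserved under Transfer because $\kappa$ is standard.

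For the existence statement I would reduce matters to a single ultrafilter-theoretic construction. By Boundedness fix a standard $Z$ with $z \in Z$, and let $U_z$ be the standard ultrafilter over $Z$ generated by $z$. By Transfer from the $\ZFC$ existence theorem for good ultrafilters (Kunen; Chang and Keisler~\cite{CK}, Theorem 6.1.4) there is a standard countably incomplete $\kappa^+$-good ultrafilter $D$ over $\kappa$. I then form the standard Fubini product $U = D \otimes U_z$ over $\kappa \times Z$, defined by $Y \in U \eqi \{i \in \kappa : \{j \in Z : \la i,j\ra \in Y\} \in U_z\} \in D$, taken so that the good factor $D$ is the \emph{outer} ultrafilter. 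Finally I would use Bounded Idealization to produce $w \in \kappa \times Z$ with $U_w = U$ and second coordinate exactly $z$, so that $z = \pi_2(w)$ is $w$-standard via the standard projection $\pi_2$, while $w$ is $\kappa^+$-good because $U_w = U$ is.

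Two points carry the real content. First, that $U = D \otimes U_z$ is countably incomplete and $\kappa^+$-good: countable incompleteness is inherited from $D$, and goodness follows from the saturation characterization of good ultrafilters together with the isomorphism $\prod_{U}\mathfrak{A} \cong \prod_{D}\bigl(\prod_{U_z}\mathfrak{A}\bigr)$ --- since $D$ is $\kappa^+$-good, an ultrapower by $D$ of \emph{any} model is $\kappa^+$-saturated, so the iterated ultrapower is $\kappa^+$-saturated, whence $U$ is $\kappa^+$-good (Chang and Keisler~\cite{CK}, Theorem 6.1.8). Getting the order of the product right is essential: with $D$ inner the argument fails. Second, the Bounded Idealization step: writing $W = \bigcap_{Y \in U} Y \cap \bigcap_{Y \notin U}\bigl((\kappa\times Z)\setminus Y\bigr)$ over a given standard finite family of $Y$'s, one has $W \in U$, hence $\pi_2[W] \in (\pi_2)_\ast U = U_z$, hence (as $\pi_2[W]$ is standard) $z \in \pi_2[W]$, so $W$ meets $\pi_2^{-1}(z)$; this is the finite satisfiability that $\BI$ needs, its parameter $z$ being an admissible nonstandard parameter. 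The main obstacle is conceptual rather than technical: one must realize $z$ \emph{exactly} as a coordinate of $w$, not merely up to the ultrafilter it generates, and simultaneously keep $U_w$ good; this is precisely what forces the good-outer product followed by the monad-intersection argument above. The whole construction is packaged in Kanovei and Reeken~\cite{KR}, Theorem 6.2.6, which may simply be cited; the case $\kappa = \omega$ recovers Proposition~\ref{cinc}.
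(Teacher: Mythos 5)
Your proposal is correct and follows essentially the same route as the paper: the Idealization schema is obtained from $\kappa^+$-saturation of ultrapowers by countably incomplete $\kappa^+$-good ultrafilters exactly as in Proposition~\ref{univ}(2), and the existence of a $\kappa^+$-good $w$ with $z$ $w$-standard is obtained from the Fubini product $D\otimes U_z$ (good factor outer) followed by Bounded Idealization to realize a point of the form $\la i,z\ra$ in all standard members of the product. The only cosmetic difference is that you justify the preservation of goodness under the product via the iterated-ultrapower isomorphism, whereas the paper simply cites Keisler's theorem; both are adequate.
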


\emph{Proof of Proposition~\ref{satunivcopy}}

It is well known (Chang and Keisler~\cite{CK},  Theorem 6.1.8) that any ultrapower  by a countably incomplete $\kappa^+$-good ultrafilter $U$ is $\kappa^+$-saturated.
As in the proof of~(2), it follows that  
Bounded Idealization over sets of cardinality $\le \kappa$ holds in $(\bbS_w, \bbS, \in)$ for  $\kappa^+$-good $w$.

To also obtain $z \in \bbS_w$ we use the following fact proved in Keisler~\cite{K}:

If $U$ is a countably incomplete $\kappa^+$-good ultrafilter over $I$ and $V$ is an ultrafilter over $J$,
then the ultrafilter $U \otimes V$ over $I \times J$ defined by 
$$   X \in    U \otimes V   \eqi  \{ i \in I \,\mid\, \{ j  \in J \,\mid\, \la i, j\ra \in X \} \in V\} \in U    $$
is countably incomplete and $\kappa^+$-good.

The definition of $\otimes$ implies that for every standard $X \in U \otimes U_z$
there is a standard $i \in I$ such that $\{ j  \in J \,\mid\, \la i, j\ra \in X \} \in U_z$, and hence $\la i, z \ra \in X$.
From Bounded Idealization one obtains  $w \in I$ such that $\la w,  z \ra \in X$ holds for all standard  $X \in U \otimes U_z$; in other words, $U_{\la w,z\ra} =  U \otimes U_z $. 
Then $z \in \bbS_{\la w,z\ra}$ and  $\la w,z\ra$ is $\kappa^+$-good, so
$\bbS_{\la w,z\ra}$ satisfies  Bounded Idealization over sets of cardinality $\le \kappa$.
\qed

\begin{remark}
\textbf{More general universes.}\label{general}
The definition of $w$-standard sets in Subsection~\ref{wstandard} can be generalized.
Let $w: S \to I$ where $S, I$ are standard sets.
We let $$\bbS_w =
 \pmbaa f(w(s_1),\ldots,w(s_{k})) \mid  k, f \in \bbS,\,
\dom f = I^k \text{ and }
  s_1,\ldots,s_{k} \in S \,\cap \, \bbS\pmbbb.$$ 
It turns out that these universes correspond precisely to the standard limit ultrapowers of the standard universe.
The proof is similar to the model-theoretic proof that every internal universe ${}^{\ast}V(X)$ is a bounded limit ultrapower of the superstructure $V(X)$; see Chang and Keisler~\cite{CK}, Theorems 4.4.19 and 6.4.10. 
 With suitable modifications,  all results described in this paper remain valid for this more general notion of $w$-standard sets.
\end{remark}


\section{Some earlier constructions} \label{appA}

There are several earlier publications where constructions of nonstandard hulls and Loeb measures in the internal framework are discussed. Below we summarize this work and provide some critical assessment.

\subsection{The ``full'' nonstandard hull} \label{full}
Let $(M, d)$  be a standard metric space.  
A straightforward  attempt to carry out the  construction of the nonstandard hull of $(M, d)$ in $\BST$ can start as follows.
Let 
$$\bbB_{\max}  = \pmbaa x \in M \,\mid\,d(x,a) \text{ is limited for some standard } a \in M\pmbbb.$$
Let $\bbE_{\max}$ be the equivalence relation on $\bbB_{\max}$ defined by 
$$\bbE_{\max} = \pmbaa \la x, y \ra \in \bbB_{\max} \times \bbB_{\max}\,\mid\,  d(x,y) \simeq 0 \pmbbb.$$
Finally, let the  function $\bbD_{\max}$ with standard real values be defined  by 
$$\bbD_{\max} = \pmbaa \la x, y, r \ra \ \in \bbB_{\max} \times \bbB_{\max} \times \bR\,\mid\, r = \sh(d(x,y))\pmbbb.$$
The classes $\bbB_{\max}, \bbE_{\max}$ and $\bbD_{\max}$ are (definable) external sets. 
For every $x \in \bbB_{\max}$, the equivalence class $\bbM(x) = \pmbaa z \in \bbB_{\max} \,\mid\,  d(x,z) \simeq 0 \pmbbb$ is an external set.
However, the final step in the construction of the nonstandard hull of $(M,d)$, to wit, the formation of the quotient space $(\bbB_{\max}/\bbE_{\max}, \bbD_{\max}/\bbE_{\max})$, cannot be carried out in $\BST$ (it would require a ``class of classes'' $\pmbaa \bbM(x) \,\mid\, x \in \bbB_{\max} \pmbbb$).

There are some ways around this difficulty.
Perhaps the most straightforward is to forgo the formation of the quotient space and work with the \emph{representatives} of the  equivalence classes (ie, the elements of $\bbB_{\max}$), and with the congruence $\bbE_{\max}$ in place of the actual equality. This would be similar to working with fractions rather than the rational numbers. But this way does not produce the nonstandard hull as an actual object of $\BST$.  

 The quotient space $(\bbB_{\max}/\bbE_{\max}, \bbD_{\max}/\bbE_{\max})$ can be formed in $\HST$ (using its axiom of Replacement for $\st$-$\in$-formulas). 
An interpretation of $\HST$ can be coded in $\BST$ (see~\cite{KR}, Definition 5.1.2, Theorem 5.1.4 and Corollary 5.1.5), so in this indirect way the ``full'' nonstandard hull can be coded in $\BST$. Unfortunately, the coding involved is far from being a ``morphism'' in any sense, so the resulting opaque code is unsuitable for transferring nonstandard intuitions about the hull to its coded version. One point of working with subuniverses is  that they have a \emph{natural} coding (by standard sets).

Perhaps the most serious objection to this way of constructing nonstandard hulls is that $(\bbB_{\max}/\bbE_{\max}, \bbD_{\max}/\bbE_{\max})$ is just ``too large.'' Trivially,  every nonstandard hull 
$(\bbB_{w}/\bbE_{w}, \bbD_{w}/\bbE_{w})$
of $(M, d)$ considered in Subsection~\ref{nshulls} embeds isometrically into 
$(\bbB_{\max}/\bbE_{\max}, \bbD_{\max}/\bbE_{\max})$ 
(note that $\bbB_w = \bbB_{\max} \cap \bbS_w$, \,$\bbE_w = \bbE_{\max}\cap \bbS_w$, and $\bbD_w =  \bbD_{\max} \cap \bbS_w$).
In many interesting cases, the metric space $(M,d)$ has nonstandard hulls of arbitrarily large cardinality, so $\bbB_{\max}/\bbE_{\max}$ is not of standard size. 
It would  be difficult to do further work with nonstandard hulls using this method, such as compare them with other standard metric spaces, take their products, or form  the space of continuous functions on them.
They are analogous to the ``universal group'' that can be constructed as a direct sum (or product) of all groups. This ``object'' is a proper class  in $\ZFC$, hardly if ever used for more than bookkeeping purposes.
Another important point about working with subuniverses is that the objects produced are standard sets (or external sets of standard size).

\subsection{Vakil's construction}\label{vakil}
In~\cite{Vakil}, Vakil presents a construction of nonstandard hulls of uniform spaces in $\IST$.
 His method  does not require fixing a particular subuniverse, and Standardization is used in a way similar to this paper.  But Vakil's method applies
only to a certain class of uniform spaces, the so-called Henson-Moore spaces. 
Revealingly, these are precisely the spaces whose nonstandard hull is independent of the choice of the nonstandard universe, ie, it is unique up to isomorphism and of standard size; see Henson-Moore~\cite{HM} and Vakil~\cite{V2}.

\subsection{Loeb measures in $\IST$} \label{notloeb}
Diener and Stroyan~\cite{DS}, p. 274, outline a possible construction of Loeb measures in $\IST$, referencing Stroyan and Bayod~\cite{SB}, Section 2.2, for further details. Here we briefly consider this approach.

Let $(\Omega, \cA, \mu)$ be an internal finitely-additive measure space, with $\Omega \subseteq O$ for a standard set $O$. 
We take $\cA = \cP(\Omega)$ for simplicity, and  analyze Loeb's construction from the point of view of $\BST$. The first step is to extend the algebra 
$\cA$ to an external $\sigma$-algebra.
Bounded Idealization in $\BST$ implies that an externally countable union of (internal) sets is either equal to a finite union or is not internal. So the construction has to deal with  external sets from the very beginning. The only way to treat external sets as objects in $\BST$  is via some kind of coding by  sets. For example, every external sequence 
$\pmbaa X_n \,\mid\, n \in \bN \cap \bbS\pmbbb$ of (internal) sets has an extension $\la X_n \,\mid\, n \in \bN \ra$
to an (internal) sequence,\footnote{
This follows from the Extension principle of $\BST$; see Kanovei and Reeken~\cite{KR}, Section 3.2e.
}
 which can be regarded as its code (of course an external sequence has many codes). This coding could be extended to higher levels of the Borel hierarchy over the algebra of (internal) subsets of $\Omega$. A simpler solution, proposed in ~\cite{DS, SB}, is to code Loeb measurable sets with the help of  Souslin schemata. One can define a \emph{Souslin schema} in $\BST$ as a function 
$S : \bN^{<\omega}  \to \cP (\Omega).$
Let $\cF =  \bN^{\omega}$.
The \emph{kernel} of $S$ is the external set 
$$\boldsymbol{\ker} S = \bigcup_{f \in \cF \cap \bbS}\; \bigcap_{n \in \bN \cap \bbS} S_{f \upharpoonright n}.$$
The external sets obtainable as kernels of Souslin schemata are \emph{Henson sets} and Henson sets whose complement in $\Omega$ is also Henson are the \emph{Loeb sets}. Loeb sets form the smallest external $\sigma$-algebra  $\boldsymbol{\sigma}(\cA)$ containing $\cP(\Omega)$; see~\cite{SB}, Theorem 2.2.3 (Luzin Separation Theorem).
Let $\bbL$ be the external set of all pairs $\la S_1, S_2\ra$ such that $\boldsymbol{\ker} S_1 \cap \boldsymbol{\ker} S_2 = \emptyset$ and $\boldsymbol{\ker} S_1 \cup \boldsymbol{\ker} S_2 = \Omega$. Then $\bbL$ can be viewed as a set of codes for Loeb sets. The algebraic operations (union, complement, externally countable union) can be coded by external relations on $\bbL$, and the Loeb measure itself can be coded  by an external  relation
on $\bbL \times (\bR \cap \bbS)$.  

The objections raised  in Subsection~\ref{full} against ``full'' nonstandard hulls apply also to the above approach to Loeb measures. In particular, one cannot form the quotient space of $\bbL$ modulo the  relation in which two codes are equivalent when they code the same external set, and the coding of the set-theoretic operations is not a ``morphism'' (eg, the code of the union of two sets is in no sense the union of their codes).
It would be awkward to work with random variables on $\bbL$ via codes, and collections of random variables are even more challenging.
 In addition, it is customary in the literature (see Albeverio et al.~\cite{A}, Kanovei and Reeken~\cite{KR}) to regard not $\boldsymbol{\sigma}(\cA)$ but its \emph{completion} $\boldsymbol{L}(\cA)$ as \emph{the} Loeb $\sigma$-algebra.  It is not usually possible to extend $\bbL$ to an external set of codes for 
$\boldsymbol{L}(\cA)$ (not even in $\HST$, because the Power Set axiom for external sets does not hold there). 
On these grounds, it is arguable whether the claim that this method represents the Loeb measure space is justified.

\subsection{Bounded Internal Set Theory}\label{Bist}
Following upon ideas of Lindstr\o m~\cite{L}, Diener and Stroyan~\cite{DS} aim for ``a description of the common ground between the two approaches to Robinson's theory.'' This takes the form of working in a polysaturated\footnote{
$\fN$ is \emph{polysaturated} if it is $\kappa$-saturated for $\kappa = |V(\as S)|$.
} nonstandard universe $\fN = ( V(S), V(\as S), \ast)$ axiomatically.
They formulate Bounded Internal Set Theory (b$\IST$) and show that its axioms hold in such universes. 
This theory should not be confused with $\BST$.  The main differences are:
\begin{itemize}
\item
The language of b$\IST$ is closely tied to the structure $\fN$. It contains a constant symbol for every internal set in $V(\as S)$ and more; in particular, it is  uncountable.
\item
The axiom schemata T, I and S are modified so as to apply only to those formulas in which all quantifiers are bounded.
\item
The axioms of $\ZFC$ are not postulated (in fact, Replacement fails in $V(S)$).
\end{itemize}

b$\IST$ proves many results familiar from $\IST$ or $\BST$, but, just like in these theories, its variables range over internal sets. It does not provide access to higher order external sets without some coding. 
Of course, the nonstandard universe $\fN$ does provide external sets that can be used to construct nonstandard hulls and Loeb measures in the usual way, but then one is using the model-theoretic rather than the axiomatic approach. 
Overall, b$\IST$ is more a useful tool for work with superstructures than a self-standing axiomatics for nonstandard analysis.

\subsection{Measure and integration over finite sets}
Nelson's  Radically Elementary Probability Theory~\cite{N2}  works with (hyper)finite probability spaces only.  
It requires only very elementary axioms (see~\cite{N2}, Chapter 4) that are easy consequences of the axioms of $\IST$,
 or even of the weaker and more
effective theory SCOT (see footnote \ref{f6}).
Further development of this approach can be found eg. in Cartier and Perrin~\cite{CP1, CP2}, who  study both 
Nelson's S-integral on finite measure spaces (which they call Loeb-Nelson integral) and its refinement, which they call Lebesgue integral. The ``radically elementary'' approach is simple and  elegant, and many interesting results have been obtained in this way. 

It is
beyond the scope of this paper to try to compare the ``radically elementary'' approach to the  nonstandard measure theory based on the work of Loeb.  
 Nelson shows (\cite{N2}, A.1) that for every stochastic process there is a \emph{nearby elementary process}, and that theorems of the conventional theory of stochastic processes can be derived from their elementary analogues. The works~\cite{N2, CP1, CP2}  do not address the question whether (up to isomorphism) Loeb measure spaces can be obtained from their constructions.


\section{Conclusion}

This paper is based on the fact that an internal set theory such as $\BST$ provides a supply of structures that are analogous to the internal part of model-theoretic nonstandard universes. 
Utilizing a natural coding and the principle of Standardization, we can mimic  external sets by standard ones.
Nonstandard hulls and Loeb measures can be obtained ``up to an isomorphism''  with their external counterparts. 
In $\BST$ they are standard objects in a common framework, which makes it easy in principle to compare them.  
In the end, they are the same structures that could be obtained by the superstructure methods carried out in $\BST$, but our technique for obtaining them is more suitable for the internal axiomatic framework.

The theory $\BST$ is the internal part of $\HST$, a theory which axiomatizes  external sets, in addition to standard and internal ones. In this theory one can carry out external constructions in the same way as in nonstandard universes; no coding is needed. The coding can be introduced afterwards in order to convert the external structures so obtained to  standard structures.

We conclude that the internal axiomatic approach based on $\BST$ can implement virtually all techniques employed by the model-theoretic approach, with some advantages: 
It could make nonstandard hulls and Loeb measures more easily accessible to working mathematicians who have learned the nonstandard methods in $\IST$ or another axiomatic framework,
 because it does not require  a priori knowledge of ultrafilters and ultrapowers or model theory. 
It establishes a single unified framework in which both the standard ``world'' and the nonstandard ``world'' are adequately axiomatized. It allows a seamless extension to a theory that encompasses also external sets. 
Finally, it enables reverse-mathematical analysis of the strength of the Axiom of Choice 
(see eg. Hrbacek and Katz~\cite{HK})
and other axioms used in the practice of nonstandard analysis.


\bibliographystyle{amsalpha}

\begin{thebibliography}{99} 

\bibitem{A}
S. Albeverio, R. H\o{}egh-Krohn, J. Fenstad, T. Lindstr\o{}m, 
\emph{Nonstandard Methods in Stochastic Analysis and Mathematical Physics}.  Pure and Applied Mathematics, 122,  Academic Press, Orlando, FL, 1986, xi + 514 pp.

\bibitem{CP1}
P. Cartier and Y. Feneyrol-Perrin,  \emph{Comparaison des diverses th\'{e}ories d'int\'{e}gration en analyse non standard}, C. R. Acad. Sci. Paris, 307, S\'{e}rie I (1988),  297 - 301.

\bibitem{CP2}
P. Cartier and Y. Perrin,  \emph{Integration over finite sets}, in:
F Diener and M Diener, \emph{Nonstandard Analysis in Practice}, Springer-Verlag, 1995, 185-204.

\bibitem{CK} 
C. C. Chang and H. J. Keisler, \emph{Model Theory, 3rd Edition}, Studies in Logic and the Foundations of Math. 73, North Holland Publ., Amsterdam, 1990, 649 pp.

\bibitem{C} N. Cutland, \emph{Nonstandard Analysis and its
  Applications (Hull, 1986)}, London Math. Soc. Stud. Texts, 10,
  Cambridge Univ. Press, 1988, xiii+345 pp.

\bibitem{DS} 
F. Diener and K. D. Stroyan, \emph{Syntactical methods in infinitesimal 
analysis},  in~\cite{C},  258-281.

\bibitem{DvdB}
B. Dinis and I. van den Berg, \emph{Neutrices and External Numbers: The Flexible Number System}, Chapman and Hall/CRC, Taylor and Francis Group, Boca Raton, FL, 2019,  360 pp.

\bibitem{F}
P. Fletcher, K. Hrbacek, V. Kanovei, M. Katz, C. Lobry, and S. Sanders, \emph{Approaches 
 to analysis with infinitesimals following Robinson, Nelson, and others}, Real Analysis Exchange, 42(2) (2017), 193 - 252. 
\url{https://arxiv.org/abs/1703.00425},  
\url{http://msupress.org/journals/issue/?id=50-21D-61F}

\bibitem{HM}
C. W. Henson and  L. C. Moore, Jr., \emph{Invariance of the nonstandard hulls of a uniform space}, in \emph{Victoria Symposium on Nonstandard Analysis, U. of Victoria 1972}, ed. by A. Hurd and P. Loeb, Lecture Notes in Math. 369, Springer-Verlag, 1974, 85 - 98.

\bibitem{H2}
K.  Hrbacek, \emph{Axiomatic foundations for nonstandard analysis}, 
Fundamenta Mathematicae 98, 1 (1978), 1-19.

\bibitem{H3}
K. Hrbacek, \emph{Nonstandard set theory}, Amer. Math. Monthly 86, 8 (1979), 659 -677. 

\bibitem{HK}
K. Hrbacek and M. G. Katz, \emph{Infinitesimal analysis
without the Axiom of Choice}  Ann. Pure Appl. Logic 172 (2021), no. 6, 102959 
 \url{https://doi.org/10.1016/j.apal.2021.102959},
\url{https://arxiv.org/abs/2009.04980}

\bibitem{J}
T. Jech, \emph{Set Theory}, Academic Press, New York, 1978, 621 pp.

\bibitem{KR2}
V. Kanovei and M. Reeken, \emph{Internal approach to external sets and universes III. Partially saturated universes}, Studia Logica 56 (1996),  3, 293 – 322.

\bibitem{KR}
V. Kanovei and M. Reeken, \emph{Nonstandard Analysis, Axiomatically}, 
Springer-Verlag, Berlin Heidelberg New York, 2004, 408 pp.

\bibitem{K}
H. J. Keisler, \emph{Ideals with prescribed degrees of goodness}, Ann. of Math. 81, 1, (1965), 112 - 116.

\bibitem{L}
T. Lindstr\o m, \emph{An introductio to nonstandard analysis}, in~\cite{C}, 1 - 105.

\bibitem{Loeb}
P. Loeb, \emph{Conversion from nonstandard to standard measure spaces and applications in probability theory}, Trans. Amer. Math. Soc. 211 (1975), 113 - 122.

\bibitem{LV}
P. Loeb, Real analysis through modern infinitesimals [book review of Vakil
2011],  SIAM Rev.  55, 2 (2013), 407-410.

\bibitem{LW}
P. A. Loeb and M. P. H. Wolff, eds., \emph{Nonstandard analysis for the working mathematician},
Mathematics and its Applications, 510, Kluwer Academic Publishers, Dordrecht, 2000, xiv+311 pp.

\bibitem{Loeb2}
P. A. Loeb and M. P. H. Wolff, eds., \emph{Nonstandard analysis for the working mathematician}. Second edition, Springer, Dordrecht, 2015, xv+481 pp.

\bibitem{Lux}
W. A. J. Luxemburg, \emph{Non-standard Analysis}, lecture notes, Department of Mathematics, California Institute of Technology, Pasadena, CA, 1962, 150 pp.

\bibitem{Lux2}
W. A. J. Luxemburg, \emph{A General Theory of Monads}, in \emph{Applications of Model Theory to Algebra, Analysis and Probability}, ed. by W. A. J. Luxemburg, Holt, Rinehart and Winston, New York, 1969, 18 - 86.

\bibitem{LK}
W. Lyantse and T. Kudryk, \emph{Introduction to Nonstandard Analysis}, Math. Studies Monograph Series Vol. 3,
VNTL Publishers, Lviv, 1997, 255 pp.

\bibitem{N}
E. Nelson, \emph{Internal set theory: a new approach to Nonstandard Analysis,}
 Bull. Amer. Math. Soc. 83 (1977),  1165 - 1198.

\bibitem{N2}
E. Nelson, \emph{Radically Elementary Probability Theory,}  Annals of Math. Studies 117, Princeton University Press, Princeton, NJ, 1987, ix + 97 pp.

\bibitem{R}
A. Robinson, \emph{Non-standard Analysis}, Studies in Logic and the Foundations of Mathematics,
North-Holland, Amsterdam, 1966.


 \bibitem{RZ} 
A. Robinson and E. Zakon, \emph{A set-theoretical characterization of enlargements,}
in: W.A.J.~Luxemburg, ed., \emph{Applications of Model Theory to Algebra,
Analysis and  Probability,} Holt, Rinehart and Winston 1969,  109-122. 

\bibitem{Rob}
A. Robert, \emph{Nonstandard Analysis}, Wiley, New York, 1988.

\bibitem{SB}
K. D. Stroyan and J. M. Bayod, \emph{Foundations of Infinitesimal Stochastic 
Analysis}, North-Holland, Amsterdam, 1986.

\bibitem{Vakil}
N. Vakil, \emph{Representation of nonstandard hulls in IST for certain uniform spaces},
Zeitschr. f. math. Logik und Grundlagen d. Math., 37 (1991), 201 - 205.

\bibitem{V}
N. Vakil, \emph{Real Analysis through Modern Infinitesimals}, Cambridge University Press, Cambridge, UK, 2011, xx+ 565 pp.

\bibitem{V2}
N. Vakil, \emph{On uniform spaces with invariant nonstandard hulls}, Journal of Logic and Analysis 6:1 (2014), 1 - 13.
doi: 10.4115/jla.2014.6.1

\bibitem{Z}
P. Zlato\v{s}, Review of Kanovei--Reeken \cite{KR2} for
Mathematical Reviews, 1996.  
  \url{https://mathscinet.ams.org/mathscinet-getitem?mr=1397485}



\end{thebibliography}
\end{document}